\xpatchcmd{\@thm}{\thm@headpunct{.}}{\thm@headpunct{}}{}{}
\def\Ddots{\mathinner{\mkern1mu\raise\p@
\vbox{\kern7\p@\hbox{.}}\mkern2mu
\raise4\p@\hbox{.}\mkern2mu\raise7\p@\hbox{.}\mkern1mu}}
\def\@biblabel#1{}
\patchcmd{\subsubsection}{\itshape}{\bfseries\itshape}{}{}
\newcolumntype{L}{>{$}l<{$}}
\newcolumntype{R}{>{$}r<{$}}
\def\l@section{\@tocline{2}{0pt}{0pc}{2.5pc}{}}
\def\l@subsection{\@tocline{2}{0pt}{2.5pc}{3.5pc}{}}
\def\l@subsubsection{\@tocline{2}{0pt}{5.25pc}{7.5pc}{}}
\newtcolorbox{myframe}[2][]{%
  enhanced,colback=white,colframe=black,coltitle=black,
  sharp corners,boxrule=0.4pt,
  fonttitle=\itshape,
  attach boxed title to top left={yshift=-0.3\baselineskip-0.4pt,xshift=2mm},
  boxed title style={tile,size=minimal,left=0.5mm,right=0.5mm,
    colback=white,before upper=\strut},
  title=#2,#1
}
\renewcommand*{\@biblabel}[1]{\textbf{\hfill#1.}}
\newtheorem{thm}{Theorem}[section]
\newtheorem{corollary}[thm]{Corollary}
\newtheorem{lemma}[thm]{Lemma}
\newtheorem{prop}[thm]{Proposition}
\theoremstyle{definition}
\newaliascnt{defin}{thm}
\newtheorem{defin}[thm]{Definition}
\newtheorem{remark}[thm]{Remark}
\newtheorem{exa}[thm]{Example}
\titleformat{\section}{\centering\large\bfseries}{\thesection}{1em}{}
\titleformat{\subsection}{\bfseries}{\thesubsection}{1em}{}
\titleformat{\subsubsection}{\normalsize\bfseries}{\thesubsubsection}{1em}{}
    	\title{\Large Combinatorial Approach to ABV-packets for $\mathbf{GL_n}$}
     \date{\vspace{-2ex}}
	\author{ Connor Riddlesden\thanks{Present Institution: Eindhoven University of Technology, the Netherlands, \url{c.d.riddlesden@tue.nl}}\,, University of Lethbridge}
\begin{document}
\maketitle
\begin{abstract}
There exists a significant conjecture in the local Langlands correspondence that A-packets are ABV-packets. For the case $G=GL_n$, the conjecture reduces to ABV-packets for orbits of Arthur type in $GL_n$ being singletons, which is a specialisation of the wider conjecture known as the Open-Orbit conjecture. In this paper, we will prove the reduced conjecture since there exists a nice combinatorial description. The result first appeared in the associated Master's thesis, however we aim to use a slightly more simplified and succinct approach in this paper using results of Knight and Zelevinskii. We will also prove the partial ordering relation associated to the conjecture for multisegments of ladder type. \\
\end{abstract}
This article contains material originally appearing in my Master's thesis \cite{riddlesden2022combinatorial}.
\section{Background and Motivation}
We are interested in proving the conjecture that ABV-packets for orbits of Arthur type are singletons when $G = GL_n$. This conjecture, as introduced in \cite{Cunningham2}, is a specialisation of the wider conjecture known as the Open-Orbit conjecture. While its general framework is of relevance to the local Langlands correspondence, a particularly prominent feature of this discussion is the combinatorial description using multisegments, first provided by M\oe glin and Waldspurger in \cite{Moeglin}. As proved in \cite[Proposition  2.3.7.]{riddlesden2022combinatorial}, the orbits of the objects categorising the ABV-packet are classified by associated ranks $r_{i,j}$, which naturally construct rank triangles as follows\\ 
\[
\begin{array}{ccccccccc}
r_{1, 1} & & r_{2, 2} & & r_{3, 3} & & r_{4, 4} & & \cdots \\
& r_{1, 2} & & r_{2, 3} & & r_{3, 4} & & \ddots & \\
& & r_{1, 3} & & r_{2, 4} & &  \ddots & & \\
& & & r_{1, 4} & &  \ddots & & & \\
& & & & \ddots & & & & \\
\end{array}.
\]\\ \\
It is also a natural question to want to study the dual orbit which once again will be classified by its ranks which are denoted by $\widetilde{r_{i,j}}$. 

Further to this, we can define a natural partial ordering on rank triangles. Let $C$ and $D$ denote respective rank triangles with respective values $c_{i,j}$ and $d_{i,j}$, then we say that $C \leq D$ if for all indexes $(i,j)$ we have the property $c_{i,j} \leq d_{i,j}$. We are especially interested in the case in which for every index $i$ we have the property $c_{i,i} = d_{i,i}$, that is, when the top rows of the rank triangles are equal.

We will now present a purely combinatorial interpretation of the rank triangle.
\begin{defin}
Let us define a \emph{segment} to be a non-empty set of consecutive integers $$ \Delta = (b, b+1, \dots, e-1, e).$$
Then a \emph{multisegment} will be a collection of segments $$ \alpha = \{ \Delta_1, \Delta_2, \dots, \Delta_r \},$$ where each segment is indexed by $i$, for $1 \leq i \leq r$, to differentiate between possible duplicates of segments. 
\end{defin}
Similar to the ranks, we will denote the dual multisegment associated to $\alpha$ by $\tilde{\alpha}$. For ease of notation, given a segment $\Delta = (b, b+1, \dots , e-1, e)$, then we say the \emph{base} of $\Delta$ is $b$ and the \emph{end} of $\Delta$ is $e$. More precisely, the base of a segment corresponds to the smallest integer and the end of the segment to the largest integer in the segment. 

The following proposition provides an incredibly quick method for computing for constructing the multisegment associated to the rank triangle.
\begin{prop} \label{multiplicity}
The multiplicity of a segment $(i, i+1, \dots, j-1, j)$ in the multisegment is denoted $m_{i, j}$, and given by $$ m_{i, j} = r_{i, j} - r_{i-1, j} - r_{i, j+1} + r_{i-1, j+1}. $$ Note that we assume that the rank is equal to $0$ if it is not defined inside the rank triangle.
\end{prop}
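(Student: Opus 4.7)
The strategy is to interpret $r_{i,j}$ combinatorially and then apply a two-variable inclusion--exclusion. Namely, I will argue that the rank $r_{i,j}$ equals the total number of segments of $\alpha$ (counted with multiplicity) that \emph{contain} the interval $[i,j]$, so that
\[
r_{i,j} \;=\; \sum_{k \leq i,\ \ell \geq j} m_{k,\ell}.
\]
Granted this identity, the claimed formula is immediate: the difference $r_{i,j} - r_{i-1,j}$ eliminates segments whose base is strictly less than $i$, leaving those with base exactly $i$ and end at least $j$. Subtracting the analogous difference $r_{i,j+1} - r_{i-1,j+1}$ further removes those whose end is at least $j+1$. What remains is exactly the count $m_{i,j}$ of segments with base $i$ and end $j$.

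The first step, identifying $r_{i,j}$ with the count of segments containing $[i,j]$, is the genuine content of the proposition, since the rank triangle of an orbit is constructed from the linear-algebraic data of the moduli (the source of the $r_{i,j}$ via Proposition~2.3.7 of \cite{riddlesden2022combinatorial}), while the multisegment $\alpha$ is an independent combinatorial invariant. I plan to invoke the Knight--Zelevinskii correspondence between orbits and multisegments to translate ranks of the relevant maps into intersection data, at which point the interval-containment description falls out from the definition of a segment as a chain of consecutive integers.

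With that combinatorial identification in hand, the final step is a one-line telescoping: one rearranges
\[
r_{i,j} - r_{i-1,j} - r_{i,j+1} + r_{i-1,j+1} \;=\; \sum_{k \leq i,\ \ell \geq j} m_{k,\ell} \;-\; \sum_{k \leq i-1,\ \ell \geq j} m_{k,\ell} \;-\; \sum_{k \leq i,\ \ell \geq j+1} m_{k,\ell} \;+\; \sum_{k \leq i-1,\ \ell \geq j+1} m_{k,\ell},
\]
and observes that all contributions collapse save the single term $m_{i,j}$. The convention that undefined ranks vanish is precisely what is needed so that this identity remains valid at the boundary of the triangle (when $i = 1$ or when $j$ equals the maximal index).

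The main obstacle is therefore not the inclusion--exclusion, which is automatic, but rather verifying the combinatorial meaning of $r_{i,j}$ as a count of segments containing $[i,j]$. Once this dictionary between the orbit-theoretic and multisegment-theoretic data is in place, the proposition follows formally, and I expect the presentation to hinge on a clean citation of the Knight--Zelevinskii result together with a brief reconciliation with the indexing conventions used here.
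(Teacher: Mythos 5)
Your argument is correct and matches the paper's implicit reasoning: the paper states the proposition without a displayed proof, but immediately afterwards records the characterisation $r_{i,j}=\#\{[k,\ell]\in\alpha : k\leq i,\ j\leq\ell\}$, and your two-variable inclusion--exclusion (together with the zero-boundary convention) is exactly the passage between that count and the multiplicity formula. The one soft spot is the citation: the dictionary between $GL$-orbit ranks and segment-containment counts is not a Knight--Zelevinskii result --- that paper concerns computing the Zelevinskii involution --- but rather the basic type-$A$ quiver classification, which the paper instead grounds in Proposition~2.3.7 of the thesis. Replacing the Knight--Zelevinskii appeal with that reference (or with the standard description of indecomposable representations of the linear quiver as interval modules, so that the rank of the composite $V_i\to V_j$ counts interval summands containing $[i,j]$) would make your write-up precise; the telescoping and boundary handling are otherwise exactly right.
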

The ranks $r_{i,j}$ are then simply the number of segments $[k,l]$ contained in the associated multisegment $\alpha$ such that $k \leq i$ and $j \leq l$.

There also exists a partial ordering relation for multisegments which we can study following the introduction of the following action between any two segments of the multisegment.
\begin{prop} \label{Prop:Actions}
Let $\Delta_1$ and $\Delta_2$ be any two segments in an arbitrary multisegment $\alpha$, then we can construct a new multisegment $\beta$ by replacing each $\Delta_1$ and $\Delta_2$ in $\alpha$ with respectively
\[ \left\{ \begin{array}{ll}
        \Delta_1 \cap \Delta_2 \text{ and } \Delta_1 \cup \Delta_2, & \mbox{if $ \Delta_1 \cap \Delta_2 \neq \emptyset$ and $\Delta_1 \neq \Delta_2$};\\
        \Delta_1 \cup \Delta_2, & \mbox{else if $\Delta_1 \cup \Delta_2$ is a segment}; \\ \Delta_1 \text{ and } \Delta_2, & \mbox{otherwise}. \\ \end{array} \right.  \] then we have that $ \alpha \leq \beta$.
\end{prop}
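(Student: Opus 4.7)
My plan is to verify the inequality $r_{i,j}(\alpha) \leq r_{i,j}(\beta)$ entry by entry in the rank triangle, i.e.\ for every pair $(i,j)$ with $i \leq j$. A single segment $[b,e]$ contributes $\mathbf{1}[b \leq i]\,\mathbf{1}[e \geq j]$ to $r_{i,j}$. Since $\alpha$ and $\beta$ agree away from the replaced pair, the difference $r_{i,j}(\beta) - r_{i,j}(\alpha)$ reduces to a purely local comparison between the contributions of $\Delta_1, \Delta_2$ and those of the listed replacements; I then proceed by case analysis on the three clauses.

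For the first clause, after possibly swapping the two segments I may assume $b_1 \leq b_2$. If additionally $e_1 \geq e_2$, then $\Delta_2 \subseteq \Delta_1$ and the proposed intersection and union are exactly $\Delta_2$ and $\Delta_1$, giving $\alpha = \beta$ trivially. Otherwise we have $b_1 \leq b_2$ and $e_1 \leq e_2$ with at least one strict inequality. Setting $x_k = \mathbf{1}[b_k \leq i]$ and $y_k = \mathbf{1}[e_k \geq j]$ for $k \in \{1,2\}$, the monotonicities $b_1 \leq b_2$ and $e_1 \leq e_2$ give $x_1 \geq x_2$ and $y_2 \geq y_1$. The old contribution is $x_1 y_1 + x_2 y_2$, while the contribution of $\Delta_1 \cap \Delta_2 = [b_2, e_1]$ together with $\Delta_1 \cup \Delta_2 = [b_1, e_2]$ is $x_2 y_1 + x_1 y_2$. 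Their difference simplifies to $(x_1 - x_2)(y_2 - y_1) \geq 0$, which settles this case.

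For the second clause, after swapping I may assume $b_1 \leq e_1 < b_2 \leq e_2$ with $e_1 + 1 = b_2$. The crucial observation, and in my view the real reason the proposition is true, is that the rank-triangle constraint $i \leq j$ rules out simultaneous containment of an index pair $(i,j)$ inside both $\Delta_1$ and $\Delta_2$: this would force $i \geq b_2 > e_1 \geq j$, contradicting $i \leq j$. Hence the old contribution is $0$ or $1$, and in any scenario where it is $1$ the witnessing segment already certifies $b_1 \leq i$ and $e_2 \geq j$, so the replacement $[b_1, e_2]$ contributes $1$ as well. The rank therefore never decreases. The third clause leaves $\alpha$ unchanged and is immediate.

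I do not foresee any serious obstacle to this argument. The one delicate point is the triangle constraint $i \leq j$: without it, the adjacency step of the second clause would spuriously appear to decrease the rank at pairs with $i > j$, and recognising that such pairs lie outside the rank triangle is exactly what makes the inequality go through.
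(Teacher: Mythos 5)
The paper states \Cref{Prop:Actions} without proof, treating it as a known fact from Zelevinskii's theory of multisegments, so there is no ``paper proof'' to compare against. Your argument is a clean, self-contained verification directly in terms of the rank invariants, and it is correct. The reduction to a local comparison is legitimate because segments not in $\{\Delta_1,\Delta_2\}$ contribute identically to every $r_{i,j}$ on both sides. In the union--intersection case, using $\Delta_1\cap\Delta_2\neq\emptyset$ to write the intersection as $[b_2,e_1]$ and the union as $[b_1,e_2]$, the factorization $r_{i,j}(\beta)-r_{i,j}(\alpha) = (x_1-x_2)(y_2-y_1)\ge 0$ is exactly the right identity. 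In the conjunction case, your observation that the triangle constraint $i\le j$ forbids a pair $(i,j)$ from lying simultaneously in two disjoint segments is indeed the crux, and the rest follows immediately.

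Two small remarks, neither of which affects the validity of your argument. First, after assuming $b_1 \le b_2$ and ruling out $e_1 \ge e_2$, you actually have $e_1 < e_2$ strictly, not merely ``at least one strict inequality''; this is harmless since only the weak inequalities $x_1 \ge x_2$, $y_2 \ge y_1$ are used. Second, you implicitly (and sensibly) read the second clause as applying to two \emph{disjoint} adjacent segments with $e_1+1=b_2$. Taken literally, the clause would also fire when $\Delta_1=\Delta_2$ (the first clause excludes equality, and $\Delta_1\cup\Delta_2=\Delta_1$ is then a segment), which would replace two copies of a segment by one and strictly decrease the top row $r_{i,i}$, falsifying the conclusion. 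That is an infelicity in the statement itself rather than in your proof, and your interpretation matches the paper's own terminology of ``conjunction'' and its repeated assumption that the compared rank triangles have identical top rows.
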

Note we will refer to the first action as the \emph{union intersection} and the second as \emph{conjunction}. The third action will simply leave the multisegment unchanged. 

The aim of this paper will be to study multisegments $\alpha$ with specific inherent properties, mainly a partial ordering relation in which for any multisegment $\beta$ such that $\alpha \leq \beta $ and $\tilde{\alpha} \leq \tilde{\beta}$ then $\alpha = \beta$. The relation with the rank triangle description then follows:

\begin{prop}[\cite{riddlesden2022combinatorial}, Corollary 2.5.7.] \label{Cor:BoundaryRTandMS}
Suppose that $\alpha$ and $\beta$ are multisegments with respective corresponding conjugacy classes denoted by $C$ and $D$ (with identical top rows). Then there exists a partial ordering between the two multisegments if and only if there exists a partial ordering on their corresponding conjugacy classes, that is, $$ \alpha \leq \beta \,\,\, \text{   if and only if   } \,\,\, C \leq D .$$
\end{prop}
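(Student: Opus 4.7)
The plan is to prove the two directions separately. The forward implication is a direct check that each atomic action of \cref{Prop:Actions} weakly raises the rank function, while the reverse implication is the deeper statement that these two partial orders actually coincide, essentially a Knight--Zelevinskii-type equivalence expressed in this combinatorial language.

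For $\alpha \leq \beta \Rightarrow C \leq D$, the key observation is that a segment $[a,b]$ contributes $\mathbf{1}_{a \leq i}\cdot\mathbf{1}_{j \leq b}$ to $r_{i,j}$, so it suffices to verify positivity of the rank-change under a single action. For the union-intersection step replacing $[a,b], [c,d]$ (with $a\leq c \leq b \leq d$, the only nontrivial case) by $[c,b]$ and $[a,d]$, a short algebraic manipulation yields
\[
\Delta r_{i,j} \;=\; \bigl(\mathbf{1}_{a\leq i}-\mathbf{1}_{c\leq i}\bigr)\bigl(\mathbf{1}_{j\leq d}-\mathbf{1}_{j\leq b}\bigr) \;\geq\; 0,
\]
both factors being nonnegative because $a \leq c$ and $b \leq d$. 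For the conjunction step replacing adjacent $[a,b], [b+1,d]$ by $[a,d]$, one notes that for any index $(i,j)$ of the rank triangle (so $i \leq j$) at most one of the two old indicators can be $1$, and any such hit forces the new indicator to be $1$ as well; hence $\Delta r_{i,j}\geq 0$.

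For $C \leq D \Rightarrow \alpha \leq \beta$, I would argue by induction on the total deficit $N(\alpha,\beta):=\sum_{i\leq j}(d_{i,j}-c_{i,j})$. The base case $N = 0$ together with the equal-top-rows hypothesis and \cref{multiplicity} forces $m_{i,j}(\alpha)=m_{i,j}(\beta)$ for every $(i,j)$, hence $\alpha = \beta$. For the inductive step, the aim is to exhibit a single action on $\alpha$ producing some $\alpha'$ whose rank triangle $C'$ satisfies $C \leq C' \leq D$ and $N(\alpha',\beta) < N(\alpha,\beta)$; then the induction hypothesis supplies $\alpha' \leq \beta$ and concatenation gives $\alpha \leq \beta$. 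The correct pair of segments is located by choosing $(i_0,j_0)$ extremally among the indices where $c_{i_0,j_0}<d_{i_0,j_0}$ and reading off from \cref{multiplicity} two segments of $\alpha$ that witness this deficit.

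The genuine obstacle is precisely this witness-finding step: a careless pairing can push some other coordinate of $C'$ strictly above $D$. I expect to overcome this by selecting $(i_0,j_0)$ lexicographically smallest, so that the local pattern of differences between $C$ and $D$ forces both the pair of segments and the choice between union-intersection and conjunction. The equal-top-rows assumption is essential here, since it fixes the underlying multiset of integers across the entire induction (all $r_{i,i}$ are preserved by every action) and so prevents the deformation from leaving the common ``level'' on which the two partial orders are being compared.
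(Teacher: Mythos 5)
Your forward direction is complete and correct: writing the contribution of $[a,b]$ to $r_{i,j}$ as $\mathbf{1}_{a\leq i}\cdot\mathbf{1}_{j\leq b}$, the identity
\[
\Delta r_{i,j}=\bigl(\mathbf{1}_{a\leq i}-\mathbf{1}_{c\leq i}\bigr)\bigl(\mathbf{1}_{j\leq d}-\mathbf{1}_{j\leq b}\bigr)\geq 0
\]
for union--intersection with $a<c\leq b<d$ checks out (the cases $a=c$ or $b=d$ give $\Delta r_{i,j}=0$, matching the fact that those moves do nothing), the conjunction case is handled correctly, the top row is invariant under both moves, and chaining over the generating moves of \Cref{Prop:Actions} gives $\alpha\leq\beta\Rightarrow C\leq D$.

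The reverse direction, however, is not a proof but a plan with an acknowledged hole exactly where the content of the proposition lies. Your induction on the total deficit $N(\alpha,\beta)=\sum_{i\leq j}(d_{i,j}-c_{i,j})$ is the right frame, and the base case is fine, but the inductive step requires a nontrivial \emph{existence} statement: whenever $C<D$ with equal top rows, there is a \emph{single} move taking $C$ to some $C'$ with $C<C'\leq D$. This is not automatic; a poorly chosen union--intersection or conjunction can push a coordinate of $C'$ strictly past $D$. Your proposal to take $(i_0,j_0)$ lexicographically smallest and ``read off two segments of $\alpha$ that witness this deficit'' does not explain (a) why two suitable segments of $\alpha$ even exist in the configuration \Cref{multiplicity} would require, (b) how to decide between union--intersection and conjunction, or (c) why the chosen move stays under $D$ in \emph{every} coordinate, not just near $(i_0,j_0)$. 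Until the witness-finding step is an actual argument rather than an expectation, what you have is a proof of one implication plus a reduction of the other to an unproved lemma; that lemma is essentially the characterization of the cover relations of this order (a theorem in its own right, in the circle of ideas around Zelevinskii and Abeasis--Del Fra rather than the Knight--Zelevinskii flow description you invoke). One more small point: once $N=0$ you already have $c_{i,j}=d_{i,j}$ for all $(i,j)$ including $i=j$, so the separate appeal to the equal-top-rows hypothesis in the base case is redundant; where that hypothesis genuinely matters is in guaranteeing that $C\leq D$ is even a meaningful comparison and that the moves never leave the fixed top row.
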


The complete background and motivation for this paper can be found in Chapter 2 of the associated Master's thesis \cite{riddlesden2022combinatorial}. In addition to this, the connection between the local Langlands correspondence, the conjecture of \cite{Cunningham2}, and the combinatorial statement in \Cref{Thm:ManySimpleA=B} is explained by Cunningham and Ray in \cite{Ray}.

We begin in \Cref{Sec:MW-alg} by presenting a method for calculating the dual multisegment from the original multisegment using the work of M\oe glin and Waldspurger. We will then discuss an alternative approach first presented by Knight and Zelevinskii using a network implementation. Finally in \Cref{Sec:Partial}, we study the partial ordering relation for multisegments of various types including those associated to Langlands parameters of Arthur type in \Cref{Thm:ManySimpleA=B}. This partial ordering will then lead to the proof of the main result, the conjecture (\Cref{Cor:Arthur}):
\begin{center}\vspace{5pt}
    \emph{For $G=GL_n$, ABV-packets for orbits of Arthur type are singletons and consequently, \\ ABV-packets for orbits of Arthur type are A-packets}. \vspace{5pt}
\end{center}
The proof of these results for $G=GL_n$ is heavily reliant on the orbits being naturally classified by their ranks and the multisegment description, and the duality algorithms. This classification and the associated algorithms does not hold true in general for different choices of $G$, nor does there currently exist any analogous combinatorial descriptions. Therefore, a more representation theoretic approach may be needed to generalise this result to other groups. That being said developing this approach for $GL_n$ is still valuable and could provide helpful insight in the generalisation.

\section{M\oe glin-Waldspurger Algorithm}
\label{Sec:MW-alg}
We can now begin to introduce the M\oe glin-Waldspurger algorithm which will compute the multisegment associated to the dual orbit from the original multisegment. The algorithm will use the following relation between segments:

\begin{defin}[\cite{Moeglin}] \label{Precedes}
Given any two segments $\Delta_1 = (b_1, \dots, e_1)$ and $\Delta_2 = (b_2, \dots, e_2)$, then we say that $\Delta_1$ \emph{precedes} $\Delta_2$ if $b_1 < b_2$, $e_1 < e_2$, and $b_2 \leq e_1 +1$.
\end{defin}

\newpage
 \noindent{\bfseries Algorithm : M\oe glin-Waldspurger} \cite{Moeglin}
Given a multisegment $\alpha$ with maximum value $e$ then we can compute the multisegment $\tilde{\alpha}$ associated to the dual orbit as follows:
 \begin{enumerate}
 \item Let $m = e$ be the maximum value in the multisegment and set $\Delta_m$ to be the shortest segment whose maximal value is $m$.
 \item If there does not exist a segment that precedes $\Delta_m$ whose maximal value is $m-1$, then go to step 5.
 \item Amongst the segments that precede $\Delta_m$ whose maximal value is $m-1$, select $\Delta_{m-1}$ to be the shortest such segment.
 \item Set $m := m-1$ and return to step 2.
 \item For each segment $\Delta_i$ for $m \leq i \leq e$ remove the maximal value $i$ from this segment. Following the removal of these end values, let us denote the new multisegment to be $\alpha'$.
 \item The dual segment formed will be $\Delta'=(m, \dots, e)$.
 \end{enumerate}
 Generating the segment $\Delta'$ will be from here forward referred to as a single iteration of the algorithm. To find the complete dual multisegment $\tilde{\alpha}$ one will need to continue this process recursively using $$ \tilde{\alpha} = \left\{ \Delta', \widetilde{(\alpha')} \right\}.$$ M\oe glin and Waldspurger then prove in \cite[Theorem 13]{Moeglin} that $\tilde{\alpha}$ will be equal to the multisegment of the dual representation found by the Zelevinskii involution.
 
The length of the segment $\Delta = (b, b+1, \dots , e-1, e)$ will be given by $e-b+1$. To simplify the notation we will now represent the segment $(i,i+1, \dots, j)$ by $[i, j]$, or if the segment is a singleton $(i)$ then $[i]$.

\begin{prop} \label{Prop:IncreasingLength}
During each iteration of the M\oe glin-Waldspurger algorithm the preceding segments will be chosen in increasing length.
\end{prop}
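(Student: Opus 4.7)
The plan is to trace what it means for $\Delta_{i-1}$ to be admissible in step 3 and show that the very act of being a preceding segment with end value exactly one less already forces the length inequality; the minimality condition in step 3 is not even needed for this particular statement.

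First I would fix an arbitrary iteration of the algorithm and consider two consecutive choices $\Delta_i$ and $\Delta_{i-1}$, where $\Delta_{i-1}$ has been selected in step 3 as a segment that precedes $\Delta_i$ with maximal value $i-1$. Writing $\Delta_i = [b_i, i]$ and $\Delta_{i-1} = [b_{i-1}, i-1]$, I would then unpack \Cref{Precedes}: the precedence relation demands in particular that $b_{i-1} < b_i$, equivalently $b_{i-1} \leq b_i - 1$.

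Next I would compute the two lengths directly. The length of $\Delta_i$ is $i - b_i + 1$ and the length of $\Delta_{i-1}$ is $(i-1) - b_{i-1} + 1 = i - b_{i-1}$. Substituting $b_{i-1} \leq b_i - 1$ yields
\[
|\Delta_{i-1}| \;=\; i - b_{i-1} \;\geq\; i - (b_i - 1) \;=\; i - b_i + 1 \;=\; |\Delta_i|,
\]
so the newly chosen preceding segment is at least as long as the previously chosen one.

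Finally I would observe that this inequality holds for every adjacent pair in the chain $\Delta_e, \Delta_{e-1}, \dots, \Delta_m$ produced by repeatedly applying step 3, so by a trivial induction the lengths form a non-decreasing sequence as $i$ decreases, which is precisely the statement of \Cref{Prop:IncreasingLength}. There is no genuine obstacle here; the only thing worth emphasising is that the inequality is a structural consequence of \Cref{Precedes} itself, independent of the ``shortest'' tie-breaking rule used by the algorithm in steps 1 and 3.
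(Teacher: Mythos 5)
Your argument is correct and coincides essentially line-for-line with the paper's own proof: both derive $|\Delta_{i-1}| \geq |\Delta_i|$ directly from the condition $b_{i-1} < b_i$ in \Cref{Precedes}, then chain the inequality across consecutive choices. Your remark that the ``shortest'' tie-breaking rule plays no role here is an accurate observation that the paper leaves implicit.
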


\begin{proof}
Let $m$ be the integer chosen from the segment $\Delta_m$ by the M\oe glin-Waldspurger. If $m$ is not the base of the segment generated by the algorithm, then there exists a segment $\Delta_{m-1}$ that precedes $\Delta_m$ with end value $m-1$. Let $b_m$ denote the base value of the segment $\Delta_m$, then by the precedes condition $b_{m-1}<b_m,$ which implies that $$m - b_m +1< m-b_{m-1}+1,$$ and $$m - b_m +1\leq (m-1)-b_{m-1}+1.$$
Thus the length of the preceding segments will be chosen in increasing length.
\end{proof}

\subsection{Network Implementation}
\label{Sub:Network}
In \cite{Knight}, Knight and Zelevinskii show that the Zelevinskii involution (and hence the M\oe glin-Waldspurger algorithm) can be implemented using a network description. Given a multisegment $\alpha=\{\Delta_1, \dots, \Delta_r\}$ with corresponding ranks $r_{i,j}$, then they use the following procedure to implement the Zelevinskii Involution:
 \begin{enumerate}
 \item For each segment $\Delta \in \alpha$ and integer $a \in \Delta$ create a vertex $v_{\Delta, a}$. We will then denote the set of vertices associated to an integer $a$ to be $V_a=\{v_{\Delta, a} \mid \Delta \in \alpha, a \in \Delta \}$.
 \item For all segments $\Delta_n, \Delta_m \in \alpha$, if $\Delta_n$ precedes $\Delta_m$ then for all integers $a$ such that $a \in \Delta_a$ and $a+1 \in \Delta_b$ add an edge from $v_{\Delta_m, a+1}$ to $v_{\Delta_n, a}$.
 \item The ranks $\widetilde{r_{i,j}}$ corresponding to the dual multisegment will be the maximum number of vertex-disjoint paths from the set of vertices $V_j$ to the set of vertices $V_i$.
 \end{enumerate}
 Note it possible to implement this method for searching for the maximum number of vertex-disjoint paths by searching for the maximum flow on an equivalent network. We instead use the following procedure:
 \begin{enumerate}
 \item Split each vertex $v$ in the graph into the nodes $v^0$, $v^1$, and add an edge of capacity $1$ from $v^0$ to $v^1$.
 \item Replace each other edge $(u, v)$ in the graph with an edge of capacity $1$ from $u^1$ to $v^0$.
 \item Add additional nodes $s$ and $t$.
 \item For each rank $\widetilde{r_{i,j}}$:
 \begin{enumerate}
 \item Add edges of capacity $1$ from $s$ to each node in the set $V^0_j$, and from each node in the set $V^1_i$ to $t$. 
 \item The rank $\widetilde{r_{i,j}}$ will then correspond to the maximum flow from $s$ to $t$.
 \end{enumerate}
 \end{enumerate}

\section{The Partial Ordering Relation on Families of Multisegments} \label{Sec:Partial}
We have previously discussed a method for computing the dual of a multisegment, however we are yet to study properties and relations satisfied by specific families of $G$, mainly that of the open-orbit conjecture. Thus we now devote this section to further our investigation into the dual group. We will use a number of natural formations of multisegments in this study and overall conclude a significant property for those multisegments of Arthur type. Given a multisegment $\alpha$ then we say that $\alpha$ satisfies the \emph{partial ordering relation} when:
$$ \abovedisplayskip=12pt \text{For all multisegments $\beta$ such that  } \alpha \leq \beta \, \, \, \text{and} \,\,\, \tilde{\alpha} \leq \tilde{\beta},  \,\,\, \text{we find } \alpha = \beta. \belowdisplayskip=12pt $$
We will use the following six numerical invariants to study the partial ordering relation:
\begin{enumerate}[i)]
\item $e_{\alpha} := \max(\alpha)$;
\item $L_{\alpha} :=$ Length of the longest segment in $\alpha$;
\item $n_{\alpha} :=$ Number of segments in $\alpha$;
\item $c_{\alpha} :=$ Minimum number of segments in which $\cup_{\Delta \in \alpha} \Delta$ constructs;
\end{enumerate}
Let us denote the segments generated by the minimal formation of $\cup_{\Delta \in \alpha} \Delta$ to be $\Delta^1, \dots, \Delta^{c_{\alpha}}$.
\begin{enumerate}[i)]
\setcounter{enumi}{4}
\item $S_{\alpha} := \sum^{c_{\alpha}}_{i=1} \left| \Delta^i \right| $;
\item $C_{\alpha} :=$ Maximum number of components in a decomposition $\alpha = \bigsqcup_i \alpha_i $ for which $\tilde{\alpha} = \bigsqcup_i \widetilde{\alpha_i} $.
\end{enumerate}

Note that if $c_{\alpha}=1$, then there exists a single segment in the minimal formation of $\cup_{\Delta \in \alpha} \Delta$ . In this case, let us define $S_{\alpha} := \left| \cup_{\Delta \in \alpha} \Delta \right|$.

Note that studying the case in which $c_{\alpha} > 1$ is unnecessary as there is effectively no interaction between the individual components, so we could simply consider them individually. Thus we will call a multisegment $\alpha$ \emph{connected} if $c_{\alpha}=1$. Further, we will call a multisegment $\alpha$ \emph{irreducible} when $C_{\alpha} = 1$ and hence the decomposition $\tilde{\alpha} = \bigsqcup_i \widetilde{\alpha_i} $ into $C_{\alpha}$ number of components will be the \emph{irreducible decomposition}.

\begin{lemma}[\cite{riddlesden2022combinatorial}, Lemma 4.2.2.]\label{Basic}
Let $\alpha, \beta$ be multisegments and $\tilde{\alpha}, \tilde{\beta}$ their respective dual multisegments. Then 
\begin{enumerate}[i)]
\item For any two multisegments $\alpha$, $\beta$ with isomorphic quiver representations, $c_{\alpha} = c_{\beta}$ and $S_{\alpha} = S_{\beta}$.
\item If $\alpha \leq \beta$ then $L_{\alpha} \leq L_{\beta}$.
\item If $\alpha \leq \beta$ then $n_{\alpha} \geq n_{\beta}$.
\item $n_{\tilde{\alpha}} \geq L_{\alpha}$ and $n_{\alpha} \geq L_{\tilde{\alpha}}$.
\item $C_{\alpha} \geq c_{\alpha}$.
\end{enumerate}
\end{lemma}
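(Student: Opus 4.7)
The plan is to dispose of the five statements separately, each with a different tool already established in the paper. Parts (i)--(iii) follow from the rank-triangle description and the multiplicity formula of \Cref{multiplicity}; part (iv) requires a short inspection of the M\oe glin--Waldspurger algorithm in \Cref{Sec:MW-alg}; and part (v) is a direct construction using the connected components of $\cup_{\Delta \in \alpha} \Delta$.

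For (i), the invariants $c_\alpha$ and $S_\alpha$ depend only on the subset $\cup_{\Delta \in \alpha} \Delta \subseteq \mathbb{Z}$ of integers appearing in some segment, and this subset is recovered from the top row of the rank triangle, since $i \in \cup_{\Delta} \Delta$ if and only if $r_{i,i} > 0$. Two multisegments with isomorphic quiver representations share the same dimension vector, hence the same top row, so the two invariants agree. For (ii), I would observe the identity $L_\alpha = \max \{ j - i + 1 : r^\alpha_{i,j} \geq 1 \}$: a segment of length $L_\alpha$ with endpoints $k \leq l$ forces $r^\alpha_{k,l} \geq 1$, while conversely $r^\alpha_{i,j} \geq 1$ produces a segment of length at least $j - i + 1$. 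Since $\alpha \leq \beta$ means $r^\alpha_{i,j} \leq r^\beta_{i,j}$, positivity of any entry transfers to $\beta$, yielding $L_\alpha \leq L_\beta$. For (iii), applying the formula of \Cref{multiplicity} and telescoping in $i$ for each fixed $j$ gives $n_\alpha = \sum_j (r^\alpha_{j,j} - r^\alpha_{j, j+1})$; in the intended setting of equal top rows, the first terms match across $\alpha$ and $\beta$ while the second sum is smaller for $\alpha$, giving $n_\alpha \geq n_\beta$.

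For (iv), the point is that a single iteration of the M\oe glin--Waldspurger algorithm, via the chain $\Delta_m, \Delta_{m-1}, \ldots$, deletes exactly one integer (the current maximum) from each of the chosen segments and then produces one dual segment. Hence any given segment $\Delta \in \alpha$ loses at most one integer per iteration, and the longest segment, of length $L_\alpha$, requires at least $L_\alpha$ iterations to be fully exhausted. Since each iteration contributes exactly one dual segment, $n_{\tilde\alpha} \geq L_\alpha$. The companion inequality $n_\alpha \geq L_{\tilde\alpha}$ then follows by applying the same argument to $\tilde\alpha$ and invoking the involutivity of the duality guaranteed by \cite[Theorem 13]{Moeglin}.

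For (v), let $\Delta^1, \ldots, \Delta^{c_\alpha}$ be the components of $\cup_{\Delta \in \alpha} \Delta$ and set $\alpha_i := \{ \Delta \in \alpha : \Delta \subseteq \Delta^i \}$, giving a disjoint decomposition $\alpha = \bigsqcup_{i} \alpha_i$. Two segments belonging to different $\alpha_i$ are separated by a gap of at least one integer and so cannot satisfy the precedence condition of \Cref{Precedes}; therefore the M\oe glin--Waldspurger algorithm (equivalently, the Knight--Zelevinskii network of \Cref{Sub:Network}) acts componentwise, producing $\tilde\alpha = \bigsqcup_i \widetilde{\alpha_i}$. This exhibits a valid decomposition with $c_\alpha$ components, so $C_\alpha \geq c_\alpha$. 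I expect the only real subtlety to be in (iv), where one must carefully justify that no iteration of the algorithm ever peels two integers from a single segment of $\alpha$; all other parts amount to bookkeeping around the rank triangle and the precedence relation.
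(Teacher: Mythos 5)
The proposal is correct, and it proves each clause using the machinery the paper has already set up (rank triangles and the multiplicity formula for (i)–(iii), the Mœglin–Waldspurger algorithm for (iv), and the precedence relation / network description for (v)), which is the natural route these elementary facts admit. In particular the two points that actually require care are handled properly: in (iv) you observe that a single iteration of the algorithm removes exactly one integer from each segment in the chosen chain (the chain members have distinct maximal values, hence are distinct segments), so the longest segment, which must eventually be emptied, forces at least $L_\alpha$ iterations and hence at least $L_\alpha$ dual segments; and in (v) you correctly note that a gap between connected components of $\cup_{\Delta\in\alpha}\Delta$ kills the precedence condition of \Cref{Precedes}, so the algorithm acts componentwise and $\tilde\alpha = \bigsqcup_i \widetilde{\alpha_i}$ with $c_\alpha$ parts, giving $C_\alpha \ge c_\alpha$. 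The telescoping identity $n_\alpha = \sum_j (r_{j,j} - r_{j,j+1})$ in (iii) is also right, and you correctly flag that the deduction uses equality of top rows, which is the standing hypothesis whenever the paper writes $\alpha \le \beta$ for multisegments.
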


\begin{lemma}[\cite{riddlesden2022combinatorial}, Lemma  4.2.3.] \label{EqualLN}
If $L_{\tilde{\alpha}} = n_{\alpha}$, $\alpha \leq \beta$ and $\tilde{\alpha} \leq \tilde{\beta}$, then $n_{\alpha} = n_{\beta} = L_{\tilde{\alpha}}  = L_{\tilde{\beta}}$.   
\end{lemma}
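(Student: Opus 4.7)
My plan is to prove the statement purely by chaining together the inequalities in \Cref{Basic}, squeezing everything into a single chain that closes up on itself.

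First I would apply part (iii) of \Cref{Basic} to the hypothesis $\alpha \leq \beta$, which gives $n_{\alpha} \geq n_{\beta}$. Next, I would apply part (iv) of \Cref{Basic} to $\beta$, yielding $n_{\beta} \geq L_{\tilde{\beta}}$. Finally, I would apply part (ii) of \Cref{Basic} to the hypothesis $\tilde{\alpha} \leq \tilde{\beta}$, which gives $L_{\tilde{\alpha}} \leq L_{\tilde{\beta}}$, or equivalently $L_{\tilde{\beta}} \geq L_{\tilde{\alpha}}$.

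Combining these three inequalities produces the chain
\[
n_{\alpha} \;\geq\; n_{\beta} \;\geq\; L_{\tilde{\beta}} \;\geq\; L_{\tilde{\alpha}},
\]
and the extra hypothesis $L_{\tilde{\alpha}} = n_{\alpha}$ forces the left-hand and right-hand ends of the chain to coincide. Consequently every intermediate inequality must be an equality, giving $n_{\alpha} = n_{\beta} = L_{\tilde{\beta}} = L_{\tilde{\alpha}}$, which is the desired conclusion.

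There is no real obstacle here: the lemma is a short consequence of previously established monotonicity properties for the invariants $n$ and $L$, together with the cross-inequality $n_{\beta} \geq L_{\tilde{\beta}}$ from \Cref{Basic} (iv). The only point worth being careful about is making sure to apply (iv) to $\beta$ rather than to $\alpha$, since it is $L_{\tilde{\beta}}$ and not $L_{\tilde{\alpha}}$ that needs to be bounded above in order to close the loop.
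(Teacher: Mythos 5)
Your chain of inequalities $n_{\alpha} \geq n_{\beta} \geq L_{\tilde{\beta}} \geq L_{\tilde{\alpha}} = n_{\alpha}$ is correct, and the cited parts of \Cref{Basic} justify each step exactly as you say. This is the natural squeeze argument that the cited lemma is set up to deliver, and it matches the intended proof.
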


\subsection{Simple Multisegments}

\begin{defin} \label{Def:Simple}
A multisegment $\alpha$ is \emph{simple} if it has the form: $$ \alpha = \{ [b,\dots, e], [b+1,\dots, e+1], \dots , [b+n-1,\dots, e+n-1] \}.  $$
\end{defin}
\begin{exa} \label{Exa:Simple}
The first example is a \emph{simple multisegment} for which each segment has the same length and the end values reduce by one each time. \\

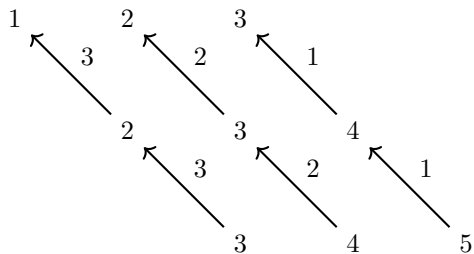
\begin{figure}[H]
 \hspace*{4cm}
\begin{tikzpicture}[node distance={15mm}, thick, main/.style = {draw=none}, transform canvas={scale=1}] 
\node[] (1) {$1$}; 
\node[main] (2) [right of=1] {$2$}; 
\node[main] (3) [right of=2]{$3$}; 
\node[main] (5) [below of=2]{$2$}; 
\node[main] (6) [right of=5] {$3$}; 
\node[main] (7) [right of=6]{$4$}; 
\node[main] (9) [below of=6]{$3$};  
\node[main] (10) [right of=9]{$4$};  
\node[main] (11) [right of=10]{$5$};  
\draw[->,black] (11) -- (7) node[midway,above right] {1}; 
\draw[->,black] (7) -- (3) node[midway,above right] {1}; 
\draw[->,black] (10) -- (6) node[midway,above right] {2}; 
\draw[->,black] (6) -- (2) node[midway,above right] {2}; 
\draw[->,black] (9) -- (5) node[midway,above right] {3}; 
\draw[->,black] (5) -- (1) node[midway,above right] {3}; 
\end{tikzpicture}
\vspace{3.5cm}
\caption{The M\oe glin-Waldspurger algorithm on a simple multisegment.}\label{Fig:MWSimple}
\end{figure}
The dual of $\{(1,2,3), (2,3,4), (3,4,5) \}$ is $\{(1,2,3), (2,3,4), (3,4,5) \}$.
\end{exa}

Given a simple multisegment, then we can index the diagonals $D_i$ given by the natural ordering where $i$ denotes their maximum values. This partitioning into the diagonals is illustrated in \Cref{Tab:Diagonal} for the simple multisegment in \Cref{Fig:Simple}.  \\ \\ \\

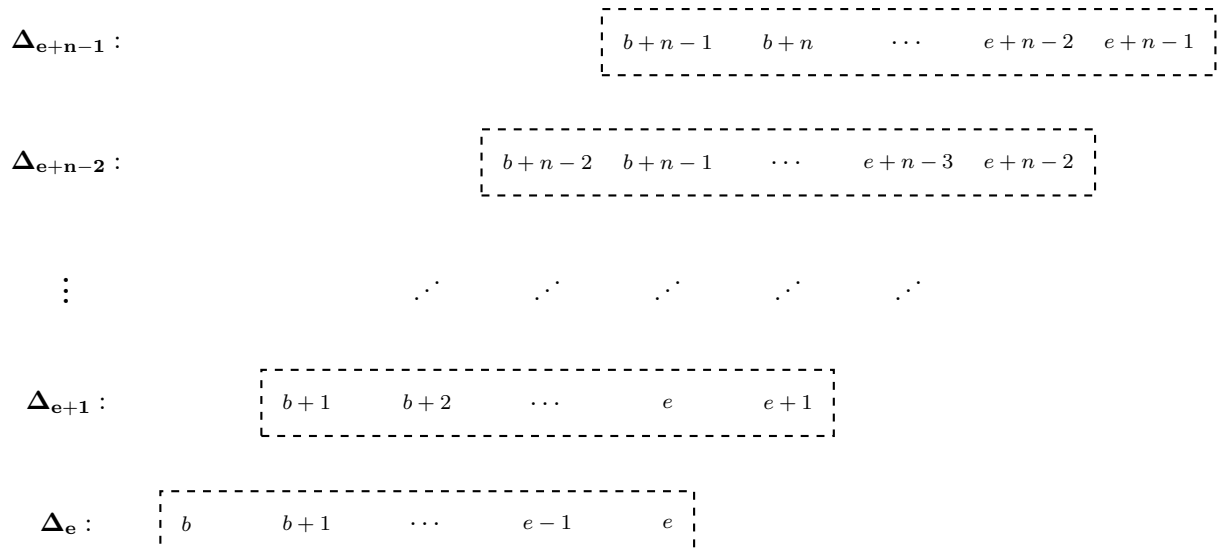
\begin{figure}[H]
\vspace{-1.25cm}
 \hspace*{0.8cm}
\begin{tikzpicture}[node distance={16mm}, thick, main/.style = {draw=none}, transform canvas={scale=1}] 
\node[] (1) {$\mathbf{\Delta_{e+n-1}:}$}; 
\node[main] (2) [below of=1] {$\mathbf{\Delta_{e+n-2}:}$}; 
\node[main] (3) [below of=2]{\textbf{\vdots}}; 
\node[main] (4) [below of=3] {$\mathbf{\Delta_{e+1}:}$}; 
\node[main] (5) [below of=4]{$\mathbf{\Delta_e:}$}; 
\node[main] (6) [right of=5] {\footnotesize$b$}; 
\node[main] (7) [right of=6]{\footnotesize$b+1$}; 
\node[main] (8) [right of=7] {$\dots$};
\node[main] (9) [right of=8]{\footnotesize$e-1$};  
\node[main] (10) [right of=9]{\footnotesize$e$};  
\node[main] (11) [above of=7]{\footnotesize$b+1$};  
\node[main] (12) [right of=11]{\footnotesize$b+2$};  
\node[main] (13) [right of=12]{$\dots$};  
\node[main] (14) [right of=13]{\footnotesize$e$};  
\node[main] (15) [right of=14]{\footnotesize$e+1$};  

\node[main] (16) [above of=12] {$\reflectbox{$\ddots$}$}; 
\node[main] (17) [right of=16]{$\reflectbox{$\ddots$}$}; 
\node[main] (18) [right of=17] {$\reflectbox{$\ddots$}$};
\node[main] (19) [right of=18]{$\reflectbox{$\ddots$}$};  
\node[main] (20) [right of=19]{$\reflectbox{$\ddots$}$};  
\node[main] (21) [above of=17]{\footnotesize$b+n-2$};  
\node[main] (22) [right of=21]{\footnotesize$b+n-1$};  
\node[main] (23) [right of=22]{$\dots$};  
\node[main] (24) [right of=23]{\footnotesize$e+n-3$};  
\node[main] (25) [right of=24]{\footnotesize$e+n-2$};  
\node[main] (26) [above of=22] {\footnotesize$b+n-1$}; 
\node[main] (27) [right of=26]{\footnotesize$b+n$}; 
\node[main] (28) [right of=27] {$\dots$};
\node[main] (29) [right of=28]{\footnotesize$e+n-2$};  
\node[main] (30) [right of=29]{\footnotesize$e+n-1$};
\draw[black,thick,dashed] ($(26.north west)+(-0.15,0.2)$)  rectangle ($(30.south east)+(0.15,-0.2)$);
\draw[black,thick,dashed] ($(21.north west)+(-0.15,0.2)$)  rectangle ($(25.south east)+(0.15,-0.2)$);
\draw[black,thick,dashed] ($(11.north west)+(-0.15,0.2)$)  rectangle ($(15.south east)+(0.15,-0.2)$);
\draw[black,thick,dashed] ($(6.north west)+(-0.15,0.2)$)  rectangle ($(10.south east)+(0.15,-0.2)$);
\end{tikzpicture}
\vspace{7.25cm}
\caption{The simple multisegment, $\alpha$.}\label{Fig:Simple}
\end{figure}

\vspace{0.25cm}
\begin{table}[h!]
\centering
\begin{tabular}{|ccccc|}
\hline
\multicolumn{5}{|c|}{\textbf{Diagonals}} \\ \hline
\multicolumn{1}{|c|}{$\mathbf{D_b}$} & \multicolumn{1}{c|}{$\mathbf{D_{b+1}}$} & \multicolumn{1}{c|}{$\mathbf{\cdots}$} & \multicolumn{1}{c|}{$\mathbf{D_{e-1}}$} & $\mathbf{D_e}$ \\ \hline
\multicolumn{1}{|c|}{$b$} & \multicolumn{1}{c|}{$b+1$} & \multicolumn{1}{c|}{$\cdots$} & \multicolumn{1}{c|}{$e-1$} & $e$ \\
\multicolumn{1}{|c|}{$b-1$} & \multicolumn{1}{c|}{$b$} & \multicolumn{1}{c|}{$\cdots$} & \multicolumn{1}{c|}{$e-2$} & $e-1$ \\
\multicolumn{1}{|c|}{$\vdots$} & \multicolumn{1}{c|}{$\vdots$} & \multicolumn{1}{c|}{$\vdots$} & \multicolumn{1}{c|}{$\vdots$} & $\vdots$ \\
\multicolumn{1}{|c|}{$-e+1$} & \multicolumn{1}{c|}{$-e+2$} & \multicolumn{1}{c|}{$\cdots$} & \multicolumn{1}{c|}{$-b$} & $-b+1$ \\
\multicolumn{1}{|c|}{$-e$} & \multicolumn{1}{c|}{$-e+1$} & \multicolumn{1}{c|}{$\cdots$} & \multicolumn{1}{c|}{$-b-1$} & $-b$ \\ \hline
\end{tabular}\vspace{0.3cm}
\caption{The simple multisegment $\alpha$ partitioned into its diagonals.}
\label{Tab:Diagonal}
\end{table}
\vspace{0.25cm}

To compute the maximum flow for each pair $(i,j)$ and hence find $\widetilde{r_{i,j}}$ one must push flow through the edges of the network associated to the individual diagonals. More specifically, for each pair $(i,j)$ and each diagonal $D_k$ one checks if the diagonal $D_k$ contains both integers $i$ and $j$. If both integers are contained, then flow must be pushed through the edges associated to the diagonal in the network, otherwise no flow will be pushed. Once each diagonal has been checked then the maximum flow will have been pushed and hence $\widetilde{r_{i,j}}$ will have been found.

The numerical invariants defined earlier in the section now provide a number of important properties for the family of simple multisegments.

\begin{prop}[\cite{riddlesden2022combinatorial}, Proposition  4.2.7.] \label{Prop:SimpleFacts}
If $\alpha$ is a simple multisegment then $n_{\tilde{\alpha}}=L_{\alpha}$.
\end{prop}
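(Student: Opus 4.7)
The plan is to run the M\oe glin-Waldspurger algorithm directly on the simple multisegment and show by induction on $L_\alpha$ that it produces exactly $L_\alpha$ segments in $\tilde\alpha$.

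Write $\alpha = \{\Delta_0, \Delta_1, \ldots, \Delta_{n-1}\}$ with $\Delta_i = [b+i, e+i]$, so $L_\alpha = e-b+1$ and the maximum value in $\alpha$ is $e+n-1$. First I would unwind a single iteration of the algorithm. The shortest (in fact, only) segment with maximal value $e+n-1$ is $\Delta_{n-1}$. For each $1 \leq k \leq n-1$ the segment $\Delta_{n-1-k}$ has base $b+n-1-k < b+n-k$ and end $e+n-1-k = (e+n-k)-1$, so by \Cref{Precedes} it precedes $\Delta_{n-k}$; moreover it is the unique segment with that maximal value. Hence the algorithm produces the chain $\Delta_{n-1} \succ \Delta_{n-2} \succ \cdots \succ \Delta_0$ of length $n$, the chain terminates (there is no segment whose maximal value is $b-1$), the dual segment formed is $\Delta' = [e,\, e+n-1]$, and the residual multisegment is
$$ \alpha' = \{ [b, e-1], [b+1, e], \ldots, [b+n-1, e+n-2] \}, $$
obtained by stripping one element off the top of each $\Delta_i$.

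The crucial observation is that $\alpha'$ is again a simple multisegment, with the same number of segments $n$ but with all lengths reduced by one, so $L_{\alpha'} = L_\alpha - 1$. I would then induct on $L_\alpha$, with base case $L_\alpha = 1$: a simple multisegment of length one is $\{[b], [b+1], \ldots, [b+n-1]\}$, and one iteration of MW empties everything and outputs the single segment $[b, b+n-1]$, giving $n_{\tilde\alpha} = 1 = L_\alpha$. For the inductive step, the recursion $\tilde\alpha = \{\Delta', \widetilde{\alpha'}\}$ gives
$$ n_{\tilde\alpha} = 1 + n_{\widetilde{\alpha'}} = 1 + L_{\alpha'} = 1 + (L_\alpha - 1) = L_\alpha. $$

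There is no real obstacle here beyond careful bookkeeping in the first iteration; the verification that the algorithm genuinely selects every segment of $\alpha$ before terminating relies only on the precedes condition from \Cref{Precedes} and the fact that there is a unique segment of each maximal value. As a sanity check, the argument also predicts that each dual segment has length exactly $n$, so the total size $n \cdot L_\alpha$ of $\alpha$ is preserved, matching \Cref{Exa:Simple}.
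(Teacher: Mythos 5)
Your argument is correct and self-contained: you run the M\oe glin--Waldspurger algorithm on $\alpha = \{[b+i,e+i] : 0 \leq i \leq n-1\}$, verify via \Cref{Precedes} that the first iteration selects the chain $\Delta_{n-1}, \Delta_{n-2}, \ldots, \Delta_0$ (each $\Delta_{n-1-k}$ is the unique segment with the required maximal value and precedes $\Delta_{n-k}$ since $b \leq e$), outputs $\Delta' = [e, e+n-1]$, and leaves the residual $\alpha' = \{[b+i, e+i-1]\}$, which is again simple with $L_{\alpha'} = L_\alpha - 1$ and the same number of segments; induction on $L_\alpha$ with the obvious base case then gives $n_{\tilde\alpha} = L_\alpha$. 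The paper itself only cites this proposition from the thesis and does not reproduce a proof, but its surrounding discussion (the partition of a simple multisegment into diagonals $D_i$ and the maximum-flow computation of $\widetilde{r_{i,j}}$) suggests the thesis may argue via the Knight--Zelevinskii network description rather than by iterating M\oe glin--Waldspurger directly; your direct approach is arguably more elementary and has the bonus of exhibiting $\tilde\alpha$ explicitly as the simple multisegment $\{[b+j, b+j+n-1] : 0 \leq j \leq L_\alpha - 1\}$.

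One small slip: you write that the chain terminates because ``there is no segment whose maximal value is $b-1$.'' It should be $e-1$: the last segment chosen is $\Delta_0 = [b,e]$, so $m$ has been decremented to $e$ and the algorithm next looks for a segment preceding $\Delta_0$ with maximal value $e-1$, of which there is none in $\alpha$ (the minimal end value is $e$). This does not affect the correctness of $\Delta'$, of $\alpha'$, or of the induction, but it should be fixed.
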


\begin{lemma}[\cite{riddlesden2022combinatorial}, Lemma  4.2.8.]\label{EqualAB}
If $\alpha$ is simple, $\alpha \leq \beta$ and $L_{\alpha} = L_{\beta}$, then $\alpha = \beta$.
\end{lemma}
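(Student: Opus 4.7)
The plan is to show that $\beta$ must have exactly $n := n_\alpha$ segments, each of length $L := L_\alpha = L_\beta$, and then recover from the top row of the rank triangle that the multiset of bases of $\beta$ has to coincide with that of $\alpha$.

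First I would observe that both actions in \Cref{Prop:Actions} preserve the top row of the rank triangle. For a union-intersection swap $\{\Delta_1,\Delta_2\}\mapsto\{\Delta_1\cap\Delta_2,\Delta_1\cup\Delta_2\}$ the count of segments containing an integer $i$ is $2$ on $\Delta_1\cap\Delta_2$, $1$ on the symmetric difference, and $0$ outside $\Delta_1\cup\Delta_2$, both before and after the move; the conjunction case is analogous. Since $\alpha\leq\beta$ is realised by a sequence of such moves, $r_{i,i}^{\alpha}=r_{i,i}^{\beta}$ for every $i$, and a double-count gives
\[
\sum_{\Delta\in\alpha}|\Delta|\;=\;\sum_i r_{i,i}^{\alpha}\;=\;\sum_i r_{i,i}^{\beta}\;=\;\sum_{\Delta\in\beta}|\Delta|.
\]
Because $\alpha$ is simple with $n$ segments each of length $L$, this common total equals $nL$.

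Second, \Cref{Basic} (iii) gives $n_\beta\leq n_\alpha=n$, while every segment of $\beta$ has length at most $L_\beta=L$. Therefore
\[
nL\;=\;\sum_{\Delta\in\beta}|\Delta|\;\leq\; n_\beta\cdot L\;\leq\; nL,
\]
so both inequalities are equalities: $n_\beta=n$ and every segment of $\beta$ has length exactly $L$.

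Third, since all segments on both sides now share the common length $L$, each segment is determined by its base. Writing $m_\alpha(b)$, $m_\beta(b)$ for the multiplicity of $b$ as a base in $\alpha$ and $\beta$, the equality $r_{i,i}^{\alpha}=r_{i,i}^{\beta}$ (a segment of length $L$ contains $i$ iff its base lies in $[i-L+1,i]$) translates into
\[
\sum_{b=i-L+1}^{i} m_\alpha(b)\;=\;\sum_{b=i-L+1}^{i} m_\beta(b)\qquad\text{for every }i\in\mathbb{Z}.
\]
Subtracting consecutive equations yields $m_\alpha(i)-m_\beta(i)=m_\alpha(i-L)-m_\beta(i-L)$, so $m_\alpha-m_\beta$ is $L$-periodic; being finitely supported it must vanish identically. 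Hence $\alpha$ and $\beta$ have identical base multisets and a common segment length, so $\alpha=\beta$.

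The main obstacle is really the third step: transferring the geometric hypothesis $\alpha\leq\beta$, once it has been cut down to ``$n$ segments of length $L$'', into the combinatorial identity $\alpha=\beta$. The sliding-window/periodicity argument bridges this gap cleanly; the two preceding steps are just length and segment-count bookkeeping on top of the fact that the elementary moves preserve the top row of the rank triangle.
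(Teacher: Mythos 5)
The paper itself does not reproduce a proof of this lemma; it only cites the thesis, so there is nothing to compare directly. That said, your argument is correct and self-contained, and it is a clean route.

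Step one is sound: both the union-intersection move and the conjunction move preserve, for every integer $i$, the number of segments containing $i$, i.e.\ the top-row ranks $r_{i,i}$. (This is also the standing assumption of "identical top rows" in \Cref{Cor:BoundaryRTandMS}, so either justification works.) From this the total length $\sum_\Delta |\Delta|$ is a $\leq$-invariant, and for a simple $\alpha$ it equals $n_\alpha L_\alpha$. Step two correctly combines \Cref{Basic}~(iii) with the hypothesis $L_\alpha = L_\beta$ via the squeeze $n_\alpha L_\alpha = \sum_{\Delta\in\beta}|\Delta| \leq n_\beta L_\beta \leq n_\alpha L_\alpha$, forcing $n_\beta = n_\alpha$ and every segment of $\beta$ to have length exactly $L := L_\alpha$. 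Step three is where the work is: since all segments on both sides have the common length $L$, a segment is determined by its base, and the top-row equality becomes the sliding-window identity on base-multiplicities; subtracting consecutive windows gives that $m_\alpha - m_\beta$ is $L$-periodic, and finite support then forces it to vanish. The conclusion $\alpha = \beta$ follows.

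One point you implicitly use but should state: the relation $\alpha \leq \beta$ is the transitive closure of the elementary moves of \Cref{Prop:Actions} (equivalently, the rank-triangle ordering of \Cref{Cor:BoundaryRTandMS}); it is via this that "top rows are preserved" applies to the whole chain from $\alpha$ to $\beta$, not just a single move. With that made explicit, the argument stands. Compared with what one might expect from the thesis (an argument phrased in terms of the numerical invariants already introduced, perhaps by showing $\beta$ is forced to be a ladder and then invoking a uniqueness lemma), your sliding-window periodicity argument is more elementary: it never needs $\beta$ to be a ladder multisegment, only that all its segments have been squeezed to a common length, and it closes the gap using nothing beyond finite support. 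That makes it a tidy, independent verification.
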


We can now study the partial ordering relation for the first family of multisegments, simple multisegments, by using these properties.

\begin{thm}\label{Thm:Simple}
Let $\alpha$ be a simple multisegment. For any multisegment $\beta$ which satisfies the conditions $\alpha \leq \beta$ and $\tilde{\alpha} \leq \tilde{\beta}$, then $\alpha = \beta$. 
\end{thm}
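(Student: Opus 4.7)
The plan is to reduce the theorem to \Cref{EqualAB}, which says that for simple $\alpha$ with $\alpha \leq \beta$, equality of maximal segment lengths $L_\alpha = L_\beta$ already forces $\alpha = \beta$. Since $\alpha \leq \beta$ gives $L_\alpha \leq L_\beta$ automatically by \Cref{Basic}(ii), the real task is to establish the reverse inequality $L_\beta \leq L_\alpha$.

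The key idea is to apply \Cref{EqualLN} not to the pair $(\alpha, \beta)$ but to the dualised pair $(\tilde{\alpha}, \tilde{\beta})$. For simple $\alpha$, \Cref{Prop:SimpleFacts} provides the identity $n_{\tilde{\alpha}} = L_\alpha$, and since the Zelevinskii involution satisfies $\tilde{\tilde{\alpha}} = \alpha$, this rewrites as $L_{\tilde{\tilde{\alpha}}} = n_{\tilde{\alpha}}$. That is exactly the hypothesis of \Cref{EqualLN} with $\tilde{\alpha}$ playing the role of the first multisegment. The ordering hypotheses $\tilde{\alpha} \leq \tilde{\beta}$ and $\tilde{\tilde{\alpha}} = \alpha \leq \beta = \tilde{\tilde{\beta}}$ are already in our possession, so \Cref{EqualLN} fires and yields
\[
n_{\tilde{\alpha}} \;=\; n_{\tilde{\beta}} \;=\; L_{\tilde{\tilde{\alpha}}} \;=\; L_{\tilde{\tilde{\beta}}} \;=\; L_\alpha \;=\; L_\beta.
\]
With $L_\alpha = L_\beta$ secured, \Cref{EqualAB} applied to the simple multisegment $\alpha$ and the dominating multisegment $\beta$ immediately closes the argument and gives $\alpha = \beta$.

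The main conceptual hurdle is spotting that \Cref{EqualLN} must be invoked on the dual side, because the numerical invariant identity guaranteed by \Cref{Prop:SimpleFacts} for simple $\alpha$ is $n_{\tilde{\alpha}} = L_\alpha$, not $L_{\tilde{\alpha}} = n_\alpha$; the latter would be the direct match to the hypothesis of \Cref{EqualLN}, but it fails in general (for instance for $\alpha = \{[1,3],[2,4]\}$, whose dual has three segments yet $\alpha$ has only two). Once this swap of roles is recognised, and the involutivity $\tilde{\tilde{\alpha}} = \alpha$ supplied by \cite[Theorem 13]{Moeglin} is used to translate the identity across the duality, the proof becomes a compact three-step assembly of \Cref{Basic}, \Cref{EqualLN}, \Cref{Prop:SimpleFacts} and \Cref{EqualAB}, with no further combinatorial manipulation required.
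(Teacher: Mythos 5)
Your proof is correct and follows essentially the same path as the paper's: read off $n_{\tilde{\alpha}}=L_{\alpha}$ from \Cref{Prop:SimpleFacts}, feed it into \Cref{EqualLN} (applied to the dual pair $\tilde{\alpha},\tilde{\beta}$, using $\tilde{\tilde{\alpha}}=\alpha$) to get $L_{\alpha}=L_{\beta}$, then finish with \Cref{EqualAB}. The paper performs the same dual substitution silently; you have merely made it explicit.

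One side remark in your write-up is wrong, however: you claim $L_{\tilde{\alpha}}=n_{\alpha}$ fails for $\alpha=\{[1,3],[2,4]\}$ because ``its dual has three segments.'' That observation gives $n_{\tilde{\alpha}}=3$, not $L_{\tilde{\alpha}}$; here $\tilde{\alpha}=\{[1,2],[2,3],[3,4]\}$, so $L_{\tilde{\alpha}}=2=n_{\alpha}$ and the identity holds. In fact the dual of a simple multisegment is again simple with $n$ and $L$ exchanged, so $L_{\tilde{\alpha}}=n_{\alpha}$ is always true in this setting; the reason to route through the dual is simply that \Cref{Prop:SimpleFacts} is the identity the paper has actually established, not that the other identity fails.
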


\begin{proof}
Firstly, by assumption the multisegment $\alpha$ is simple so using \Cref{Prop:SimpleFacts} we know that $L_{\alpha} = n_{\tilde{\alpha}}$, and hence \Cref{EqualLN} therefore implies that $ L_{\alpha}= L_{\beta}$. Finally, using \Cref{EqualAB} we can conclude that $\alpha = \beta$.
\end{proof}
Therefore we have proved that the partial ordering relation will be satisfied for all simple multisegments.

\subsection{Ladder Multisegments}
We will now study broader family of multisegments in ladder multisegments, for which there exists a natural ordering between each of the segments.
\begin{defin}
We say that a multisegment $\alpha$ is a \emph{ladder multisegment} if it has the form: $$\alpha = \{ \Delta_1, \dots , \Delta_{n_{\alpha}} \},$$ where if we write $\Delta_i = [b_i, e_i]$ then for each $i<j$ we must have $b_i < b_j$ and $e_i < e_j$.
\end{defin}
\begin{exa}  \label{Exa:QFM}
This example is a \emph{ladder multisegment} for which there exists a complete ordering of the segments based around their base and end values. \\
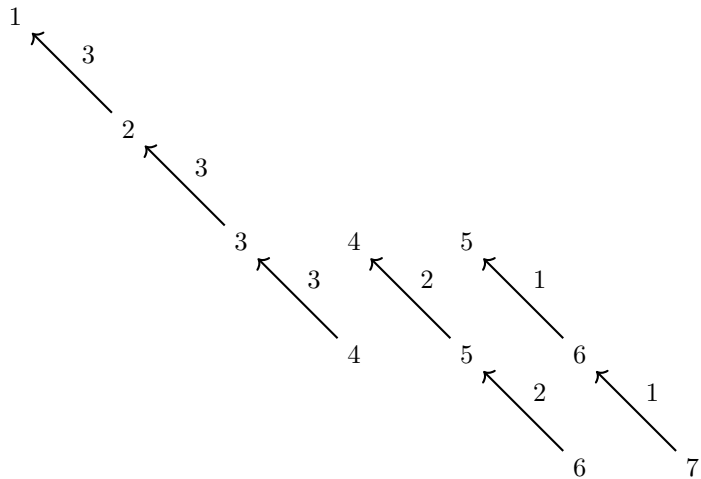
\begin{figure}[H]
 \hspace*{3cm}
\begin{tikzpicture}[node distance={15mm}, thick, main/.style = {draw=none}, transform canvas={scale=1}] 
\node[] (1) {$1$}; 
\node[main] (2) [right of=1] {\phantom{$1$}}; 
\node[main] (3) [below of=2]{$2$}; 
\node[main] (4) [right of=3] {\phantom{$1$}}; 
\node[main] (5) [below of=4]{$3$}; 
\node[main] (6) [right of=5] {$4$}; 
\node[main] (7) [right of=6]{$5$}; 
\node[main] (8) [below of=6] {$4$};
\node[main] (9) [below of=7]{$5$};  
\node[main] (10) [right of=9]{$6$};  
\node[main] (11) [below of=10]{$6$};  
\node[main] (12) [right of=11]{$7$};  
\draw[->,black] (12) -- (10) node[midway,above right] {1}; 
\draw[->,black] (10) -- (7) node[midway,above right] {1}; 
\draw[->,black] (11) -- (9) node[midway,above right] {2}; 
\draw[->,black] (9) -- (6) node[midway,above right] {2}; 
\draw[->,black] (8) -- (5) node[midway,above right] {3}; 
\draw[->,black] (5) -- (3) node[midway,above right] {3}; 
\draw[->,black] (3) -- (1) node[midway,above right] {3}; 
\end{tikzpicture}
\vspace{6.0cm}
\caption{The M\oe glin-Waldspurger algorithm on a ladder multisegment.}\label{Fig:MWLadder}
\end{figure}
The dual of $\{(1), (2), (3,4,5), (4,5,6), (6,7) \}$ is $\{(1,2,3,4), (4,5,6), (5,6,7) \}$.
\end{exa}

Note that any simple multisegment will also be a ladder multisegment, so we have seen an additional example of a ladder multisegment in \Cref{Exa:Simple}.

\begin{remark}\label{Lem:PrecedesQuantum}
At each iterative step of the M\oe glin-Waldspurger algorithm and when assigning flows in the network description the set of preceding segments which are chosen form an irreducible ladder multisegment. This follows from \Cref{Prop:IncreasingLength} and the properties discussed throughout this subsection must therefore be satisfied for each iteration.
\end{remark}

Following on from {\bfseries Section \ref{Sec:MW-alg}} in which we defined the M\oe glin-Waldspurger algorithm and the segment generated by an iteration of the algorithm by $\Delta'$, then let us now specify that $\Delta'(\alpha)$ will be the segment generated by the first iteration of the M\oe glin-Waldspurger algorithm. Also let us denote $\alpha - \Delta'(\alpha)$ to be the multisegment produced by the algorithm following the removal of the elements chosen for $\Delta'(\alpha)$, this will also be the multisegment in which the next iteration of the algorithm is carried out on. Thus the dual multisegment will be recursively generated by $$ \tilde{\alpha} = \{ \Delta'(\alpha) , \widetilde{\alpha - \Delta'(\alpha)} \}.$$

\begin{lemma}[\cite{riddlesden2022combinatorial}, Lemma  4.2.14.]\label{Lem:AB}
If $n_{\tilde{\alpha}} + n_{\alpha} = S_{\alpha} + C_{\alpha}$, $\alpha \leq \beta$ and $\tilde{\alpha} \leq \tilde{\beta}$ then $n_{\tilde{\alpha}} = n_{\tilde{\beta}}$, $n_{\alpha} = n_{\beta}$ and $n_{\tilde{\beta}} + n_{\beta} = S_{\beta} + C_{\beta}$.
\end{lemma}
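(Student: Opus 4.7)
The plan is to chain together the basic inequalities of \Cref{Basic} with a general lower bound on $n_\mu + n_{\tilde\mu}$, and then use the tightness hypothesis on $\alpha$ to force equalities throughout.

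First, I would observe that the two actions of \Cref{Prop:Actions} (union-intersection and conjunction) each preserve the union $\bigcup_{\Delta\in\alpha}\Delta$ as a set of integers: indeed $(\Delta_1\cap\Delta_2)\cup(\Delta_1\cup\Delta_2)=\Delta_1\cup\Delta_2$ in the first case and trivially in the second. Consequently both $c_\alpha$ and $S_\alpha$ are invariants of the partial ordering, so $\alpha\leq\beta$ gives $S_\alpha=S_\beta$ and $c_\alpha=c_\beta$. Applying \Cref{Basic} (iii) to $\alpha\leq\beta$ and to $\tilde\alpha\leq\tilde\beta$ also yields $n_\alpha\geq n_\beta$ and $n_{\tilde\alpha}\geq n_{\tilde\beta}$.

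Second, I would establish the universal inequality
\[
n_\mu + n_{\tilde\mu}\;\geq\; S_\mu+C_\mu
\]
for every multisegment $\mu$. Writing $\mu=\bigsqcup_{i=1}^{C_\mu}\mu_i$ in its irreducible decomposition reduces the claim to the irreducible case $C_\mu=1$, where one checks $n_\mu+n_{\tilde\mu}\geq S_\mu+1$ by tracking the M\oe glin--Waldspurger algorithm iteration by iteration, using \Cref{Prop:IncreasingLength} to control how many of the $S_\mu$ integers in the union can be absorbed by each dual segment of given length.

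Applying this inequality with $\mu=\beta$ and substituting $S_\beta=S_\alpha$ produces
\[
S_\alpha+C_\alpha \;=\; n_\alpha+n_{\tilde\alpha}\;\geq\; n_\beta+n_{\tilde\beta}\;\geq\; S_\alpha+C_\beta,
\]
so $C_\alpha\geq C_\beta$. The final and most delicate step is the matching bound $C_\beta\geq C_\alpha$, which I would extract by lifting the irreducible decomposition: given $\alpha=\bigsqcup_{i=1}^{C_\alpha}\alpha_i$ with $\tilde\alpha=\bigsqcup\widetilde{\alpha_i}$, I would argue that every chain of actions realising $\alpha\leq\beta$ must respect this partition of supports, yielding $\beta=\bigsqcup\beta_i$ with $\alpha_i\leq\beta_i$; applying the dual argument to $\tilde\alpha\leq\tilde\beta$ then gives $\tilde\beta=\bigsqcup\widetilde{\beta_i}$ and hence $C_\beta\geq C_\alpha$. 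The chain of inequalities above thereby collapses to equalities, forcing $n_\beta+n_{\tilde\beta}=S_\beta+C_\beta$. Since $n_\alpha-n_\beta$ and $n_{\tilde\alpha}-n_{\tilde\beta}$ are nonnegative and sum to zero, each must vanish individually, yielding the three stated conclusions.

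The hard part is exactly this lifting of the irreducible decomposition: one must show that a decomposition of $\alpha$ compatible with the Zelevinskii involution induces an analogous decomposition of $\beta$, which is a structural rather than numerical statement about the interaction of the partial ordering with duality, and is where I expect the bulk of the technical work to concentrate.
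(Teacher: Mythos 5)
Your overall plan---squeeze a chain of inequalities and force it to collapse---is the natural shape for this lemma (it mirrors the proof of \Cref{EqualLN}), and the first two steps are sound: the two actions of \Cref{Prop:Actions} do preserve the set $\bigcup_{\Delta\in\alpha}\Delta$, hence $S_\alpha=S_\beta$ and $c_\alpha=c_\beta$, and \Cref{Basic}(iii) gives $n_\alpha\geq n_\beta$ and $n_{\tilde\alpha}\geq n_{\tilde\beta}$. The universal inequality $n_\mu+n_{\tilde\mu}\geq S_\mu+C_\mu$ is also true (it is exactly the content of \Cref{Cor:Quant}, which the paper states after \Cref{Lem:AB}), but your derivation of it is only sketched and would need an actual argument for the irreducible case rather than the gesture at ``tracking the M\oe glin--Waldspurger algorithm.''

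The genuine gap is the step you correctly flag as ``the hard part'': $C_\beta\geq C_\alpha$. The justification you propose---that any chain of actions realising $\alpha\leq\beta$ ``must respect the partition of supports'' coming from the irreducible decomposition $\alpha=\bigsqcup\alpha_i$---is simply not true. The pieces of an endoscopic decomposition need not have disjoint (or even non-overlapping) supports; for example $\mu=\{[1],[1,2]\}$ decomposes as $\{[1]\}\sqcup\{[1,2]\}$ with both pieces supported at $1$. A union-intersection move can take one segment from $\alpha_1$ and one from $\alpha_2$ and replace them with $\Delta_1\cap\Delta_2$ and $\Delta_1\cup\Delta_2$, which need not lie in either original piece, so the claimed lifting of the decomposition breaks down in general. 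Nothing in the proposal explains why the two hypotheses $n_\alpha+n_{\tilde\alpha}=S_\alpha+C_\alpha$ and $\tilde\alpha\leq\tilde\beta$ rule out such cross-piece moves. Without a correct proof that $C_\beta\geq C_\alpha$ (equivalently, that the tightness hypothesis forces $C_\alpha=c_\alpha$, which together with $c_\alpha=c_\beta$ and $C_\beta\geq c_\beta$ would close the chain), the chain $S_\alpha+C_\alpha\geq n_\beta+n_{\tilde\beta}\geq S_\alpha+C_\beta$ only yields $C_\alpha\geq C_\beta$ and none of the three stated conclusions follows. So the skeleton is right, but the load-bearing step is missing.
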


\begin{lemma}[\cite{riddlesden2022combinatorial}, Lemma  4.2.15.] \label{Lem:QuantEquiv}
If $\alpha$ and $\beta$ are ladder multisegments, $\alpha \leq \beta$ and $n_{\alpha} = n_{\beta}$ then $\alpha = \beta$.
\end{lemma}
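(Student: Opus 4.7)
The plan is to show that, under the hypotheses, $\alpha$ and $\beta$ have identical multisets of base values and of end values, and then invoke the rigid pairing structure built into the ladder condition to deduce $\alpha=\beta$.

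First I would note that $\alpha\leq\beta$ forces the top rows of the rank triangles to agree, i.e.\ $r_{i,i}^\alpha=r_{i,i}^\beta$ for every $i$. Geometrically this is because $(r_{i,i})_i$ is the dimension vector of the underlying quiver representation, which is preserved under degeneration; equivalently, each of the two generating moves in \Cref{Prop:Actions} preserves the multiset of integers appearing across the segments of a multisegment, and $r_{i,i}^\gamma$ is precisely the multiplicity of $i$ in that multiset. For a multisegment $\gamma$ set
\[
s_i(\gamma):=r_{i,i}^\gamma-r_{i-1,i}^\gamma, \qquad t_j(\gamma):=r_{j,j}^\gamma-r_{j,j+1}^\gamma,
\]
with the convention that undefined ranks vanish; \Cref{multiplicity} together with the description of $r_{i,j}$ as the count of segments $[k,l]$ with $k\leq i$, $j\leq l$ identifies $s_i(\gamma)$ with the number of segments of $\gamma$ with base $i$ and $t_j(\gamma)$ with the number of segments with end $j$, so that $n_\gamma=\sum_i s_i(\gamma)=\sum_j t_j(\gamma)$.

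Combining the top-row equality with the rank inequality $r_{i-1,i}^\alpha\leq r_{i-1,i}^\beta$ yields
\[
s_i^\alpha-s_i^\beta \;=\; r_{i-1,i}^\beta - r_{i-1,i}^\alpha \;\geq\; 0,
\]
and summing over $i$ recovers $n_\alpha\geq n_\beta$ (in agreement with \Cref{Basic}(iii)). The hypothesis $n_\alpha=n_\beta$ then forces term-by-term equality $s_i^\alpha=s_i^\beta$ for every $i$, and the analogous argument with $t_j$ gives $t_j^\alpha=t_j^\beta$ for every $j$. Thus $\alpha$ and $\beta$ share the same multiset of base values and the same multiset of end values.

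Finally, I would invoke the ladder hypothesis. By definition the base values of a ladder are strictly increasing, as are its end values, so the multisets from the previous step are honest sets; both ladders share the same set $\{b_1<\cdots<b_n\}$ of bases and the same set $\{e_1<\cdots<e_n\}$ of ends. The strict monotonicity of both sequences forces the $k$-th smallest base to pair with the $k$-th smallest end, so the segment decompositions of $\alpha$ and $\beta$ coincide, giving $\alpha=\beta$. The step I expect to be the main obstacle is pinning down top-row equality cleanly at the outset; once that is secured, the reduction to a multiset equality and the rigidity of the ladder pairing collapse the argument quickly.
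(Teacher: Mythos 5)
Your argument is correct. The paper does not reproduce a proof of this lemma but defers to the cited thesis, so I cannot directly compare routes; what you give is a clean, self-contained argument using the rank-triangle characterisation of the partial order (\Cref{Cor:BoundaryRTandMS}). The key steps all check out: the inclusion--exclusion identities $s_i(\gamma)=r_{i,i}^\gamma-r_{i-1,i}^\gamma$ and $t_j(\gamma)=r_{j,j}^\gamma-r_{j,j+1}^\gamma$ really do count segments with base $i$ and end $j$ respectively; top-row equality is indeed forced (both moves in \Cref{Prop:Actions} preserve the multiset of integers occurring with multiplicity, hence preserve $r_{i,i}$); combining this with $r_{i-1,i}^\alpha\leq r_{i-1,i}^\beta$ and $n_\alpha=n_\beta$ does force $s_i^\alpha=s_i^\beta$ and $t_j^\alpha=t_j^\beta$ termwise; and the ladder condition makes each of these counts $0$ or $1$ and ties the $k$-th smallest base to the $k$-th smallest end, so equality of the base and end sets gives $\alpha=\beta$. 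One stylistic note: the termwise equality in fact gives you the stronger conclusion that the entire second row of the rank triangles agrees, $r_{i-1,i}^\alpha=r_{i-1,i}^\beta$ for all $i$, though you do not need that extra information. The argument is sound and needs no repair.
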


\begin{lemma}[\cite{riddlesden2022combinatorial}, Lemma  4.2.19.] \label{Lem:IrredQFM}
If $\alpha$ is an irreducible ladder multisegment then $$n_{\tilde{\alpha}} + n_{\alpha} = S_{\alpha} + c_{\alpha}.$$
\end{lemma}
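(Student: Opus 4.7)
The plan is to argue by induction on $S_\alpha$. Since $\alpha$ is irreducible, \Cref{Basic}(v) together with $C_\alpha=1$ gives $c_\alpha=1$, reducing the target to $n_{\tilde\alpha}+n_\alpha=S_\alpha+1$. The base case $S_\alpha=1$ is immediate, as then $\alpha=\{(x)\}$ is self-dual and both sides equal $2$. For the inductive step I would peel off the first iteration of the M\oe glin--Waldspurger algorithm, producing the dual segment $\Delta'(\alpha)=[m,e]$ and the residual multisegment $\alpha'$, and exploit the recursion $n_{\tilde\alpha}=1+n_{\widetilde{\alpha'}}$.

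The heart of the argument is a structural analysis of that first iteration on a ladder $\alpha=\{\Delta_1,\dots,\Delta_{n_\alpha}\}$ with $\Delta_i=[b_i,e_i]$. Since the $e_i$ are strictly increasing and the precedence condition $b_j\le e_{j-1}+1$ is automatic whenever $e_{j-1}=e_j-1$, the algorithm picks a contiguous block $\Delta_{j^*},\dots,\Delta_{n_\alpha}$ whose end values $e_{j^*},e_{j^*}+1,\dots,e_{n_\alpha}$ are consecutive integers. The key combinatorial observation is that the singletons among these picked segments form a contiguous tail $\Delta_{j_0},\dots,\Delta_{n_\alpha}$ for some $j_0\ge j^*$: if $\Delta_j$ is a picked singleton then $b_j=e_j$, so the ladder forces $b_{j+1}>e_j$, and combined with $b_{j+1}\le e_{j+1}=e_j+1$ this yields $b_{j+1}=e_{j+1}$, making $\Delta_{j+1}$ a singleton as well.

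From here the remainder is bookkeeping. The $p:=n_\alpha-j_0+1$ picked singletons disappear on removal of their single element, whereas the other picked segments just shrink at the top; in particular $\alpha'$ remains a ladder with $n_{\alpha'}=n_\alpha-p$. A short case analysis, organised by whether $j_0>j^*$ or $j_0=j^*$, shows that the integers lost from $\cup\alpha$ form either the consecutive block $\{e_{j_0-1},\dots,e_{n_\alpha}\}$ of size $p+1$ (with $\cup\alpha'=[b_1,e_{j_0-1}-1]$ a single segment), or the block $\{e_{j^*},\dots,e_{n_\alpha}\}$ of size $p$ with $\alpha'=\emptyset$; the possibility $j_0=j^*>1$ is ruled out by irreducibility, since it would leave $e_{j^*}-1$ uncovered in $\cup\alpha$ and violate $c_\alpha=1$. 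In all surviving cases $S_\alpha-S_{\alpha'}=p+c_{\alpha'}$, and $\alpha'$ is either empty or again a connected (hence irreducible) ladder of strictly smaller $S$.

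Finally, applying the inductive hypothesis to $\alpha'$ — with the trivial reading $n_{\widetilde{\alpha'}}=n_{\alpha'}=S_{\alpha'}=c_{\alpha'}=0$ when $\alpha'=\emptyset$ — gives $n_{\widetilde{\alpha'}}+n_{\alpha'}=S_{\alpha'}+c_{\alpha'}$, and substituting $n_{\tilde\alpha}=1+n_{\widetilde{\alpha'}}$ together with $n_\alpha=n_{\alpha'}+p$ and $S_\alpha=S_{\alpha'}+p+c_{\alpha'}$ recovers $n_{\tilde\alpha}+n_\alpha=S_\alpha+1$. The main obstacle is the tail-of-singletons claim together with its consequence that $\alpha'$ remains connected when non-empty; once these are in hand, the bookkeeping is mechanical.
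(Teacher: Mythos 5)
Your inductive approach via one iteration of the M\oe glin--Waldspurger algorithm is sound, and the structural analysis is essentially correct: on a ladder the precedence step is controlled entirely by consecutiveness of end values, the picked segments form the suffix $\Delta_{j^*},\dots,\Delta_{n_\alpha}$ with consecutive ends, and the picked singletons indeed form a tail $\Delta_{j_0},\dots,\Delta_{n_\alpha}$ by exactly the argument you give ($b_j=e_j$ and $b_{j+1}\le e_{j+1}=e_j+1$ force $b_{j+1}=e_{j+1}$). Your identification of the lost values of $\cup\alpha$ is also correct: $e_j$ disappears precisely when every larger-indexed segment has base exceeding $e_j$, which for $j<n_\alpha$ is equivalent to $\Delta_{j+1}$ being a singleton, so the lost set is $\{e_{\max(j^*,j_0-1)},\dots,e_{n_\alpha}\}$, matching your two cases. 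The bookkeeping $S_\alpha-S_{\alpha'}=p+c_{\alpha'}$, $n_{\alpha'}=n_\alpha-p$, $n_{\tilde\alpha}=1+n_{\widetilde{\alpha'}}$ then closes the induction. (Since the paper defers this proof to the thesis, I cannot say whether this is the same route, but it is a natural one given the tools the paper sets up, in particular \Cref{Prop:IncreasingLength} and \Cref{Lem:PrecedesQuantum}.)

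There is one genuine, though easily repaired, logical gap: the parenthetical ``connected (hence irreducible)'' applied to $\alpha'$. \Cref{Basic}(v) gives $C_\alpha\geq c_\alpha$, so irreducible $\Rightarrow$ connected, but the converse is not available to you here. The fact that a connected ladder is automatically irreducible is a consequence of \Cref{Cor:Quant}, which itself depends on the lemma you are proving, so you cannot invoke it without circularity. As written, your inductive hypothesis is the lemma's statement for irreducible ladders, but you have only established that $\alpha'$ is a connected ladder, so the hypothesis does not directly apply. The clean fix is to strengthen what you induct on: prove ``for every connected ladder multisegment $\alpha$ one has $n_{\tilde\alpha}+n_\alpha=S_\alpha+1$'' by induction on $S_\alpha$. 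Your entire argument (including the use of $c_\alpha=1$ to rule out $j_0=j^*>1$) only ever uses connectedness, not irreducibility, so the proof goes through verbatim with this reformulated statement; the lemma as stated then follows because irreducibility implies connectedness.

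One further small point worth making explicit: you should verify that $\alpha'$ is again a ladder. The bases are unchanged, and the gap $e_{j^*-1}\le e_{j^*}-2$ guarantees $e_{j^*-1}<e_{j^*}-1=e_{j^*}'$, so the shrunk ends remain strictly above the untouched ones and the ladder conditions persist. This is implicit in your write-up but deserves a sentence.
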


\begin{lemma}[\cite{riddlesden2022combinatorial}, Lemma  4.2.21.] \label{Lem:Quant}
If $\alpha$ is any multisegment and $$n_{\tilde{\alpha}} + n_{\alpha} = S_{\alpha} + c_{\alpha},$$ then $\alpha$ is a ladder multisegment.
\end{lemma}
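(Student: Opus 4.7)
The plan is to argue by the contrapositive: assume $\alpha$ is not a ladder multisegment, and show that the equality $n_{\tilde\alpha} + n_\alpha = S_\alpha + c_\alpha$ must fail, with $n_{\tilde\alpha} + n_\alpha > S_\alpha + c_\alpha$ instead.

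First I would reduce to the connected case. If $\bigcup_{\Delta \in \alpha}\Delta$ splits into $c_\alpha$ components, the precedence condition $b_2 \le e_1 + 1$ of \Cref{Precedes} forbids any M\oe glin--Waldspurger chain from crossing between components, so the algorithm acts independently on each piece. The invariants $n_{\tilde{(\cdot)}}$, $n_{(\cdot)}$, $S_{(\cdot)}$ then split additively while $c_\alpha$ is the number of components, so the proposed identity splits componentwise; moreover $\alpha$ is a ladder iff each component is a ladder. Hence it suffices to assume $c_\alpha = 1$.

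Next I would induct on $n_\alpha$, the base cases $n_\alpha \le 1$ being trivially ladders. For the inductive step, suppose $\alpha$ is connected and non-ladder with $n_\alpha \ge 2$, and consider the first iteration of the algorithm producing $\Delta'(\alpha) = [m_0, e_0]$ and residual multisegment $\gamma = \alpha - \Delta'(\alpha)$; by \Cref{Lem:PrecedesQuantum} the chain chosen is itself an irreducible ladder. The technical heart of the argument is a balance identity
\[
n_{\tilde\alpha} + n_\alpha - (S_\alpha + c_\alpha) \;=\; n_{\tilde\gamma} + n_\gamma - (S_\gamma + c_\gamma) + \delta(\alpha),
\]
where $\delta(\alpha)\ge 0$ is a defect that is strictly positive precisely when there exists a segment of $\alpha$ whose maximum lies in $[m_0, e_0]$ yet is not selected into the chain (equivalently, when the ``shortest preceding'' rule is forced to discard an alternative candidate). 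If $\delta(\alpha) > 0$ the strict inequality follows immediately. Otherwise the non-ladder defect of $\alpha$ is inherited by $\gamma$, and the inductive hypothesis applied to $\gamma$ (with $n_\gamma < n_\alpha$) gives $n_{\tilde\gamma}+n_\gamma > S_\gamma + c_\gamma$, which then propagates back to $\alpha$ via the identity above.

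The main obstacle is establishing the balance identity and defining $\delta(\alpha)$ correctly. The subtle points are tracking (i) $n_\alpha - n_\gamma$, counting the chain segments that become empty after endpoint removal; (ii) $S_\alpha - S_\gamma$, since a support element is lost only when every contributing segment was in the chain; (iii) $c_\alpha - c_\gamma$, since removal of endpoints can disconnect the support; and (iv) verifying that when $\delta(\alpha) = 0$ a non-ladder $\alpha$ truly yields a non-ladder $\gamma$, so that the inductive hypothesis genuinely applies. A careful case analysis on how the chain interacts with the non-chain segments sharing its endpoint range is what makes these cancellations precise and closes the inductive loop.
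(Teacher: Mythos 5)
The paper itself does not reprint a proof of this lemma --- it cites Lemma~4.2.21 of the Master's thesis --- so I can only assess the internal soundness of your proposal, not compare it line by line against the source.

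Your overall architecture (contrapositive, reduction to the connected case, induction on $n_\alpha$ via one M\oe glin--Waldspurger step, and a ``balance identity'' with a defect term) is a plausible shape for the argument, and the reduction to $c_\alpha = 1$ is fine since the precedence relation cannot jump between components. However, the proposal has one concretely false claim and several key steps that are asserted rather than proved. The false claim is the parenthetical equivalence in the characterization of $\delta(\alpha) > 0$: you assert that ``there exists a segment whose maximum lies in $[m_0,e_0]$ yet is not selected into the chain'' is the same as ``the shortest-preceding rule is forced to discard an alternative candidate.'' These are not equivalent, and the first version over-counts. Take $\alpha = \{[1,3],[2,4],[3,3]\}$: the algorithm selects $\Delta_4 = [2,4]$ and $\Delta_3 = [1,3]$, giving $\Delta'(\alpha) = [3,4]$ and residual $\gamma = \{[1,2],[2,3],[3]\}$; here $[3,3]$ has maximum $3 \in [m_0,e_0]=[3,4]$ and is not selected, so your first condition holds, yet a direct computation gives $n_{\tilde\alpha}+n_\alpha-(S_\alpha+c_\alpha) = 7-5 = 2$ and $n_{\tilde\gamma}+n_\gamma-(S_\gamma+c_\gamma) = 6-4 = 2$, so $\delta(\alpha) = 0$. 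The reason is that $[3,3]$ never was a candidate for the preceding segment (it does not precede $[2,4]$ since its base is too large), so no candidate was discarded. The characterization of $\delta>0$ must therefore be formulated in terms of discarded candidates, not in terms of segments whose maximum falls in the produced range.

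Beyond that, the three pillars on which the induction rests are left as declarations: (a) the balance identity itself (tracking $n_\alpha - n_\gamma$, $S_\alpha - S_\gamma$ and $c_\alpha - c_\gamma$ across one iteration) is not derived; (b) $\delta(\alpha) \ge 0$ is not proved; and (c) step (iv), that $\delta(\alpha) = 0$ together with $\alpha$ being non-ladder forces $\gamma$ to be non-ladder, is precisely the content-bearing combinatorial fact and is not argued at all. Without (c) the induction cannot close, because a non-ladder $\alpha$ whose residual is a ladder would escape the inductive hypothesis unless $\delta > 0$ is separately established for that configuration. In short, this is a reasonable outline of an approach, and with the corrected characterization of $\delta$ it could likely be pushed through, but as written it does not constitute a proof: the claimed equivalence is wrong, and the key cancellations and propagation of the non-ladder defect are stated as hopes rather than demonstrated.
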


\begin{corollary}[\cite{riddlesden2022combinatorial}, Corollary 4.2.22.] \label{Cor:Quant}
For an arbitrary multisegment $\alpha$ which is not a ladder multisegment, we have $$n_{\tilde{\alpha}} + n_{\alpha} > S_{\alpha} + C_{\alpha} \geq S_{\alpha} + c_{\alpha},$$ and hence a multisegment $\alpha$ is a ladder multisegment if and only if $$n_{\tilde{\alpha}} + n_{\alpha} = S_{\alpha} + C_{\alpha} = S_{\alpha} + c_{\alpha}.$$ 
\end{corollary}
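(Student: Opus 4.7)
The plan is to split the argument into the two cases---$\alpha$ a ladder and $\alpha$ not a ladder---and in both cases work through the irreducible decomposition $\alpha = \bigsqcup_{i=1}^{C_\alpha} \alpha_i$ with $\tilde\alpha = \bigsqcup_i \widetilde{\alpha_i}$, reducing each numerical invariant to a sum over components. A preliminary observation is that every irreducible component $\alpha_i$ must be connected, so $c_{\alpha_i} = 1$: if $c_{\alpha_i} > 1$, splitting $\alpha_i$ further along the gaps in $\cup_{\Delta \in \alpha_i} \Delta$ would still preserve the duality decomposition, since the M\oe glin-Waldspurger algorithm treats disjoint integer ranges independently, contradicting the maximality of $C_\alpha$.

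For the ladder direction, the key claim is that when $\alpha$ is a ladder the irreducible decomposition coincides exactly with the decomposition into connected components of $\cup_{\Delta \in \alpha} \Delta$; this yields $C_\alpha = c_\alpha$ together with $S_\alpha = \sum_i S_{\alpha_i}$. Each component $\alpha_i$ is then a connected, irreducible ladder, and \Cref{Lem:IrredQFM} gives $n_{\widetilde{\alpha_i}} + n_{\alpha_i} = S_{\alpha_i} + 1$; summing over $i$ produces $n_{\tilde\alpha} + n_\alpha = S_\alpha + C_\alpha = S_\alpha + c_\alpha$.

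For the non-ladder direction, the right-hand inequality $S_\alpha + C_\alpha \geq S_\alpha + c_\alpha$ is immediate from \Cref{Basic}(v). For the strict inequality $n_{\tilde\alpha} + n_\alpha > S_\alpha + C_\alpha$, I apply the general lower bound $n_{\widetilde{\alpha_i}} + n_{\alpha_i} \geq S_{\alpha_i} + 1$ to each irreducible (hence connected) component, with equality if and only if $\alpha_i$ is a ladder, using \Cref{Lem:IrredQFM} together with the contrapositive of \Cref{Lem:Quant}. If $C_\alpha > c_\alpha$, some pair of irreducible components share integers within a common connected piece of $\cup_{\Delta \in \alpha} \Delta$, forcing $\sum_i S_{\alpha_i} > S_\alpha$; if instead $C_\alpha = c_\alpha$, the components live on disjoint integer ranges and, since a disjoint union of ladders on disjoint ranges is itself a ladder while $\alpha$ is not, at least one $\alpha_i$ fails to be a ladder and its component bound is strict. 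In either sub-case, summation yields $n_{\tilde\alpha} + n_\alpha > S_\alpha + C_\alpha$, and the ``hence'' equivalence in the statement then follows by combining the two main cases.

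The main obstacle is supplying the general lower bound $n_{\tilde\gamma} + n_\gamma \geq S_\gamma + c_\gamma$ for arbitrary (and in particular irreducible, non-ladder) $\gamma$, since the results already stated only give equality for irreducible ladders and the converse half of \Cref{Lem:Quant}. I expect this to follow from an induction on $n_\gamma$ that tracks one iteration of the M\oe glin-Waldspurger algorithm: extracting $\Delta'(\gamma)$ contributes one to $n_{\tilde\gamma}$, while the increasing length property of \Cref{Prop:IncreasingLength} controls the drop in $n_\gamma$ and $S_\gamma$, the delicate case being when removing $\Delta'(\gamma)$ disconnects $\gamma$ or merges previously separate pieces of the union.
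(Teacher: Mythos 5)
Your overall decomposition strategy---reducing to irreducible components $\gamma_1,\dots,\gamma_{C_\alpha}$, each of which is connected by \Cref{Basic}(v) (since $C_{\gamma_i}=1\geq c_{\gamma_i}$ forces $c_{\gamma_i}=1$), and then comparing $\sum_i(n_{\widetilde{\gamma_i}}+n_{\gamma_i})$ against $S_\alpha+C_\alpha$---is the right scaffolding for this corollary, and you are honest about where it is incomplete. But the two load-bearing claims are both left unproven, and they are not obviously reducible to the lemmas quoted in the paper.

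First, in the ladder direction you assert that for a ladder the irreducible decomposition coincides with the decomposition into connected pieces of $\cup_{\Delta\in\alpha}\Delta$, i.e.\ that a connected ladder has $C_\alpha=1$. Nothing in \Cref{Basic}, \Cref{Lem:IrredQFM}, or \Cref{Lem:Quant} delivers this: \Cref{Lem:IrredQFM} assumes irreducibility rather than establishing it, and \Cref{Lem:Quant} runs in the wrong direction. Without this claim you only obtain $n_{\tilde\alpha}+n_\alpha=\sum_i S_{\gamma_i}+C_\alpha\geq S_\alpha+C_\alpha\geq S_\alpha+c_\alpha$ for ladders, which is the inequality you were trying to saturate, not an equality. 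So the forward implication of the ``if and only if'' is not actually proved in your sketch.

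Second, and as you yourself flag, the strict inequality for non-ladders hinges on the general lower bound $n_{\tilde\gamma}+n_\gamma\geq S_\gamma+c_\gamma$ (applied to each irreducible, hence connected, component), with equality exactly in the ladder case. The paper only supplies \emph{equality} for irreducible ladders (\Cref{Lem:IrredQFM}) and ``equality implies ladder'' (\Cref{Lem:Quant}); neither rules out the possibility $n_{\tilde\gamma}+n_\gamma<S_\gamma+c_\gamma$ for a non-ladder $\gamma$. Writing ``I expect this to follow from an induction on $n_\gamma$'' records the gap but does not close it, and the induction you sketch is genuinely delicate: a single M\oe glin--Waldspurger iteration can simultaneously decrease $n_\gamma$, $S_\gamma$, and change $c_\gamma$ in ways that are not monotone, so the bookkeeping via \Cref{Prop:IncreasingLength} needs to be carried out explicitly before the argument can be called a proof. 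Your subcase split when $C_\alpha>c_\alpha$ (claiming $\sum_i S_{\gamma_i}>S_\alpha$) also relies on an implicit fact---that two irreducible components inside the same connected range cannot tile it without overlap, because adjacent segments would precede one another and hence interact under the involution---which again should be argued rather than asserted. Until both the connected-ladder-is-irreducible claim and the general inequality are established, the proposal is a plausible plan but not a proof. (The paper itself only cites the thesis for this corollary, so I cannot check your route against the author's; but as written it does not yet stand on its own.)
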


\begin{thm}\label{Thm:Quant}
Let $\alpha$ be a ladder multisegment. If $\beta$ is a multisegment such that $\alpha \leq \beta$ and $\tilde{\alpha} \leq \tilde{\beta}$, then $\alpha = \beta$. 
\end{thm}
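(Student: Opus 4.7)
The proof will be a short chaining argument that bootstraps from the numerical characterization of ladder multisegments in \Cref{Cor:Quant} and then invokes \Cref{Lem:QuantEquiv}. The plan has four steps.

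First, I would use the hypothesis that $\alpha$ is a ladder multisegment together with \Cref{Cor:Quant} to extract the numerical identity
\[
n_{\tilde\alpha} + n_\alpha \;=\; S_\alpha + C_\alpha.
\]
This is the exact hypothesis required to feed into \Cref{Lem:AB}.

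Second, I would apply \Cref{Lem:AB} with the given relations $\alpha \leq \beta$ and $\tilde\alpha \leq \tilde\beta$. Its conclusion yields three pieces of information at once: $n_\alpha = n_\beta$, $n_{\tilde\alpha} = n_{\tilde\beta}$, and the matching identity
\[
n_{\tilde\beta} + n_\beta \;=\; S_\beta + C_\beta
\]
on the larger multisegment $\beta$. The equality $n_\alpha = n_\beta$ is what I will eventually feed into \Cref{Lem:QuantEquiv}; the identity on $\beta$ is what I will use to upgrade $\beta$ to a ladder multisegment.

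Third, I would invoke the \emph{if and only if} direction of \Cref{Cor:Quant} to deduce from the identity $n_{\tilde\beta} + n_\beta = S_\beta + C_\beta$ that $\beta$ is itself a ladder multisegment. (This is the only step where one has to look carefully: \Cref{Lem:Quant} is phrased with $c_\beta$ rather than $C_\beta$, so one needs \Cref{Cor:Quant} to close this gap and conclude that $\beta$ is ladder from the $C_\beta$-version of the identity.)

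Finally, with both $\alpha$ and $\beta$ now known to be ladder multisegments satisfying $\alpha \leq \beta$ and $n_\alpha = n_\beta$, \Cref{Lem:QuantEquiv} applies directly and gives $\alpha = \beta$, as required. The only subtlety in the whole argument is the $C$-versus-$c$ bookkeeping in the third step; every other step is an immediate invocation of a prior result.
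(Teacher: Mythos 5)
Your proposal is correct and follows exactly the same chain as the paper's own proof: extract $n_{\tilde\alpha}+n_\alpha=S_\alpha+C_\alpha$ from \Cref{Cor:Quant}, apply \Cref{Lem:AB} to transfer the identity and $n_\alpha=n_\beta$ to $\beta$, use \Cref{Cor:Quant} again to conclude $\beta$ is ladder, and finish with \Cref{Lem:QuantEquiv}. The only difference is that you flag the $C_\beta$-versus-$c_\beta$ bookkeeping explicitly, which the paper leaves implicit in its citation of \Cref{Cor:Quant}.
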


\begin{proof}
Since $\alpha$ is a ladder multisegment then $n_{\tilde{\alpha}} + n_{\alpha} = S_{\alpha} + C_{\alpha}$ by \Cref{Cor:Quant}. Using \Cref{Lem:AB}, $n_{\tilde{\beta}} + n_{\beta} = S_{\beta} + C_{\beta}$ and $n_{\alpha} = n_{\beta}$, thus $\beta$ is a ladder multisegment by \Cref{Cor:Quant}. Therefore we have satisfied all hypothesis of \Cref{Lem:QuantEquiv} so $\alpha = \beta$.
\end{proof}

\subsection{Multisegments of Arthur Type}
We will now present the proof that another family of multisegments satisfy the partial ordering relation on multisegments and relate this to a significant conjecture in the local Langlands correspondence that $$ \abovedisplayskip=5pt \text{\emph{ABV-packets for orbits of Arthur type in $GL_n$ are singletons}}. \belowdisplayskip=5pt $$ To do this we must first introduce this notion of Arthur type. A \emph{Langlands parameter of Arthur type} is a Langlands parameter $\phi$ such that $\phi = \phi_{\psi}$ as defined in the book \cite[Section 3.6]{Cunningham1}. One property that this enforces is that the corresponding multisegments must have the property of being symmetric along the zero element $i$, where $\lambda_i$ corresponds to the $q^0$-eigenspace. That is, if we relabel each element in the multisegment $\alpha$ to be such that $i \rightarrow 0$, $i-1 \rightarrow -1$, $i+1 \rightarrow 1$, \dots; then the segment $\Delta = [b,e] \in \alpha$ if and only if the segment $- \Delta = [-e, -b] \in \alpha$.  A key consequence of this restriction is that we are now only considering symmetric irreducible multisegments, so when $S_{\alpha}$ is odd then the description is trivial since $0$ will be a central value. Alternatively when $S_{\alpha}$ is even, we have to slightly modify the description to be such that the two labellings each side of the symmetry will be $-\frac{1}{2}$ and $\frac{1}{2}$, and any subsequent values will then differ by $1$ as they get further away from the centre. Note this description preserves the structure of their being $1$ between each of the labellings of the eigenvectors, and hence preserves the previously discussed properties. Further, we defined the maximum value of our multisegment to be $e_{\alpha}$ so it will always be true that $S_{\alpha} = 2 e_{\alpha} +1.$

The restriction to only studying those Langlands parameters of Arthur type imposes a further condition on the multisegment $\alpha$ that $\alpha$ must be formed from the union of simple symmetric multisegments as discussed in  \cite[Remark 1.1]{Cunningham2}.

\begin{lemma} \label{Lem:A1}
Let $\alpha$ be an arbitrary multisegment containing a sub-multisegment $\alpha_1$ of the form $$ \abovedisplayskip=5pt \belowdisplayskip=5pt \alpha_1 = \left\{ [-e,b], [-e+1, b+1], \dots, [-b-1, e-1], [-b, e] \right\}.$$ If $\alpha_1$ contains both of the shortest segments containing the minimum and maximum values, $-e$ and $e$, of the multisegment $\alpha$ then it will not be possible to generate $[b,e]$ or a segment containing $b, \dots, e$ from any sub-multisegment other than $\alpha_1$.
\end{lemma}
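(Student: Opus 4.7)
The plan is to show uniqueness directly: any chain of preceding segments in the M\oe glin-Waldspurger algorithm whose output contains $[b,e]$ must consist precisely of the segments of $\alpha_1$. This handles both parts of the lemma (the case $[b,e]$ itself and the case of a strictly longer segment containing $\{b,\dots,e\}$) simultaneously.

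First I would pin down the top of the chain. Since $e=e_{\alpha}$ is the maximum value in $\alpha$, every segment whose maximum is $e$ ends at $e$, so the shortest such segment coincides with the shortest segment containing $e$. By hypothesis this segment lies in $\alpha_1$ and equals $[-b,e]$, of length $b+e+1$. Hence whenever an iteration of the algorithm outputs a dual segment ending at $e$, it is forced to begin with $\Delta_e=[-b,e]$.

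Next I would propagate the length constraint using \Cref{Prop:IncreasingLength}. Writing each successive preceding segment as $\Delta_m=[b_m,m]$, the proposition gives $|\Delta_m|\ge b+e+1$, hence $b_m\le m-b-e$. On the other hand $b_m\ge -e$ since $-e$ is the minimum value of $\alpha$. Combining these gives $m\ge b$, which rules out any dual output $[a,e]$ with $a<b$ and so reduces the problem to the case $a=b$. For that case the chain has exactly $e-b+1$ segments; the sandwich $-e\le b_m\le m-b-e$, the strict increase $b_{m-1}<b_m$, and the two forced endpoints $b_e=-b$ and $b_b=-e$ together pin down $b_m=-e+(m-b)$ for every $m\in\{b,\dots,e\}$. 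Thus $\Delta_m=[-e+(m-b),\,m]$, which is precisely the $(m-b+1)$-st segment of $\alpha_1$, and the generating sub-multisegment must equal $\alpha_1$.

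The delicate point is rigidity in the presence of possible duplicate segments: if $\alpha$ happened to contain copies outside $\alpha_1$ whose endpoints coincide with elements of $\alpha_1$, the algorithm might nominally be able to pick those copies. The hypothesis that both extremal shortest segments (those containing $\pm e$) lie in $\alpha_1$ is what rigidifies the top choice $\Delta_e$ and the bottom choice $\Delta_b$; the intermediate squeeze then leaves no further freedom. I expect the main bookkeeping obstacle to be making this rigidity argument airtight, in particular checking that the preceding relation together with the base-squeeze does not permit an intermediate segment to be substituted by a distinct non-$\alpha_1$ segment of the same length and endpoint, and tracking this consistently between the M\oe glin-Waldspurger picture and the network implementation of \Cref{Sub:Network}.
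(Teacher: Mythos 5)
Your proof is correct, but it takes a genuinely different route from the paper. The paper's argument first invokes \Cref{Lem:PrecedesQuantum} to view the generating chain $\gamma$ as an irreducible ladder multisegment, then applies the numerical identity $n_{\gamma}+n_{\tilde\gamma}=S_{\gamma}+c_{\gamma}$ from \Cref{Lem:IrredQFM}, and squeezes $S_{\gamma}\le 2e+1$ against $n_{\gamma}+n_{\tilde\gamma}-c_{\gamma}\ge(e-b+1)+(e+b+1)-1=2e+1$. Equality then forces $n_{\gamma}$, $n_{\tilde\gamma}$, $L_{\gamma}$ to be minimal; \Cref{Prop:IncreasingLength} then makes every segment have length $e+b+1$, and the $e-b+1$ such segments covering $[-e,e]$ can only be $\alpha_1$. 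You instead bypass \Cref{Lem:IrredQFM} entirely and squeeze the bases $b_m$ directly: $|\Delta_e|\ge e+b+1$ from the shortest-segment hypothesis, \Cref{Prop:IncreasingLength} propagates this to $b_m\le m-b-e$, while $b_m\ge -e$ gives $m\ge b$, and then the strict increase $b_{m-1}<b_m$ on $e-b+1$ integers between $-e$ and $-b$ pins $b_m=-e+(m-b)$. This is more elementary and self-contained — it does not rely on the machinery of $n_{\tilde\gamma}$, $S_{\gamma}$, or the irreducible-ladder identity — and it pins each segment explicitly rather than by a counting-plus-covering argument. The cost is small: you still need $e_{\Delta_m}=m$ along the chain, which holds in both the M\oe glin--Waldspurger picture and (by an easy downward induction from $e_{\Delta_e}=e_{\alpha}$, using the precedes condition $e_{\Delta_{m-1}}<e_{\Delta_m}$) in the network picture, so your argument covers both implementations. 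Regarding the ``delicate point'' you flag about duplicate copies: the conclusion of the lemma, as it is used in \Cref{Thm:ManySimpleA=B}, is to be read segment-wise, i.e.\ the generating chain must be \emph{a copy of} $\alpha_1$; since your squeeze determines each $\Delta_m$ uniquely as a segment $[-e+(m-b),m]$, this is exactly what is established, and which physical copy is chosen is immaterial — so there is no gap there.
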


\begin{proof} 
Firstly, by \Cref{Lem:PrecedesQuantum} when generating a segment for the dual we generate a sub-multisegment $\gamma$ which forms an irreducible ladder multisegment. Thus $\gamma$ must be such that it satisfies $$\abovedisplayskip=5pt \belowdisplayskip=5pt n_{\gamma} + n_{\tilde{\gamma}} - c_{\gamma} = S_{\gamma}, $$ by \Cref{Lem:IrredQFM}. Now $S_{\gamma} \leq S_{\alpha} = 2e+1$, since $\gamma$ is a sub-multisegment of $\alpha$. Also $\gamma$ is irreducible so $c_{\gamma}=1$, and the segment created contains $b, \dots, e$ so $n_{\gamma} \geq e - b +1$. By assumption, $e$ is the maximum value and the shortest segment ending in $e$ has length $e-(-b)+1$ since $\alpha$ is both simple and symmetric, so $L_{\gamma} \geq e+b+1$ by \Cref{Prop:IncreasingLength} which implies $n_{\tilde{\gamma}} \geq L_{\gamma} \geq e+b+1$ from \Cref{Basic}. Using these values then we find that $$ S_{\gamma} = n_{\gamma} +n_{\tilde{\gamma}} - c_{\gamma} \geq (e-b+1) + (e+b+1) -1 = 2e+1. $$ Therefore $S_{\gamma} = 2e+1$, and $n_{\gamma}, n_{\tilde{\gamma}}, L_{\gamma}$ must all be minimal. Therefore by \Cref{Prop:IncreasingLength}, every segment in $\gamma$ must be of the minimal length $e+b+1$, and there must exist $e - b +1$ segments covering the values from $[-e,e]$, so the only possible formation for $\gamma$ is the sub-multisegment $\alpha_1$.
\end{proof}

\begin{lemma}\label{Lem:A1b}
Let $\alpha$ be an arbitrary multisegment containing a sub-multisegment $\alpha_1$ of the form $$ \alpha_1 = \left\{ [-e,b], [-e+1, b+1], \dots, [-b-1, e-1], [-b, e] \right\} = \left\{ \Delta^b, \Delta^{b+1}, \dots, \Delta^{e-1}, \Delta^e \right\}.$$ If $\alpha_1$ contains both of the shortest segments containing the minimum and maximum values, $-e$ and $e$, of the multisegment $\alpha$ then removing copies of $\alpha_1$ will induce an endoscopic decomposition, that is, $$ \alpha = \alpha_1 \sqcup (\alpha - \alpha_1) \,\,\,\, \text{and} \,\,\,\,  \tilde{\alpha} = \widetilde{\alpha_1} \sqcup \widetilde{(\alpha - \alpha_1)}.$$
\end{lemma}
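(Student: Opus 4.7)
The plan is to run the M\oe glin-Waldspurger algorithm on $\alpha$ iteration by iteration, showing that the first $n_{\widetilde{\alpha_1}}=L_{\alpha_1}=e+b+1$ iterations peel off the segments of $\widetilde{\alpha_1}$ while consuming only $\alpha_1$, after which one is left with $\alpha-\alpha_1$ whose subsequent iterations yield $\widetilde{(\alpha-\alpha_1)}$. Concatenating the two halves will then give the claimed $\tilde{\alpha}=\widetilde{\alpha_1}\sqcup\widetilde{(\alpha-\alpha_1)}$.

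The first iteration is handled directly by \Cref{Lem:A1}. Since $[-b,e]\in\alpha_1$ is a shortest segment of $\alpha$ with maximum value $e$, MW selects it as $\Delta_e$. A length count along the chain (exactly as in the proof of \Cref{Lem:A1}) shows that the shortest preceding segment with each successive maximum value $e-1,e-2,\dots,b$ has length $e+b+1$ and coincides with the corresponding segment of $\alpha_1$; the chain terminates at $[-e,b]$ because no segment of $\alpha$ has base below $-e$. Hence $\Delta'(\alpha)=[b,e]$ and by \Cref{Lem:A1} the consumed sub-multisegment is precisely one copy of $\alpha_1$. What remains is $\alpha^{(1)}:=\alpha_1^{(1)}\sqcup(\alpha-\alpha_1)$ with $\alpha_1^{(1)}=\{[-b,e-1],[-b-1,e-2],\dots,[-e,b-1]\}$.

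To iterate I would induct on $L_{\alpha_1}$: the reduced piece $\alpha_1^{(1)}$ is again a simple multisegment, and one checks that it still contains a shortest segment realising the current maximum value and a shortest segment containing the current minimum of $\alpha^{(1)}$, so the argument of \Cref{Lem:A1} reapplies to the next iteration, and so on, until $\alpha_1^{(L_{\alpha_1})}=\emptyset$ and the surviving multisegment is $\alpha-\alpha_1$. The main obstacle is justifying that these shortest-segment conditions really do propagate: in principle $\alpha-\alpha_1$ could contain a segment shorter than the relevant segment of $\alpha_1^{(k)}$ at the new extremal value, which would divert MW's choice. To address this I would either establish a mild strengthening of \Cref{Lem:A1} in which the numerical identity $n_\gamma+n_{\tilde{\gamma}}-c_\gamma=S_\gamma$ from \Cref{Lem:IrredQFM} is applied to any preceding chain that descends through the current $\alpha_1^{(k)}$, forcing $\gamma$ to sit entirely inside that piece, or equivalently recast the whole problem in the Knight--Zelevinskii network picture (\Cref{Sub:Network}) and show by a flow-exchange argument that no system of vertex-disjoint paths from $V_j$ to $V_i$ gains capacity by crossing from $\alpha_1$-vertices to $(\alpha-\alpha_1)$-vertices. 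Either route reduces the statement to additivity of the dual ranks $\widetilde{r_{i,j}}$ across the decomposition, which by \Cref{Cor:BoundaryRTandMS} is exactly the endoscopic condition being asserted.
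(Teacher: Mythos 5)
There is a genuine gap, and the biggest problem sits in the framing of your main plan. You assert that ``the first $n_{\widetilde{\alpha_1}}=L_{\alpha_1}$ iterations peel off the segments of $\widetilde{\alpha_1}$ while consuming only $\alpha_1$, after which one is left with $\alpha-\alpha_1$.'' That is not how the M\oe glin--Waldspurger algorithm behaves: the iterations coming from $\alpha_1$ and from $\alpha-\alpha_1$ interleave. Take $\alpha_1=\{[-2,2]\}$ and $\alpha-\alpha_1=\{[-1,1]\}$, which satisfies the hypothesis of the lemma. The first iteration on $\alpha$ picks $[-2,2]$, terminates immediately (nothing precedes it), and produces the singleton $[2]$, leaving $\{[-2,1],[-1,1]\}$. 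The \emph{second} iteration now selects $[-1,1]$, which lives in $\alpha-\alpha_1$, because it is strictly shorter than $[-2,1]$. So already at step two the algorithm has left $\alpha_1$, long before $L_{\alpha_1}=5$ iterations have elapsed. The conclusion of the lemma still holds in this example, but your induction --- which is organised around the claim that the $\widetilde{\alpha_1}$-segments are produced first, then the $\widetilde{(\alpha-\alpha_1)}$-segments --- cannot be run as stated. Even if you reformulate the induction as ``each chain stays entirely in one of the two parts,'' you would additionally need to show that the chains falling inside $\alpha_1^{(k)}$ collectively reproduce $\widetilde{\alpha_1}$ exactly as if MW had been run on $\alpha_1$ alone, i.e.\ that the interleaved presence of $\alpha-\alpha_1$ never changes which preceding segment gets selected within the $\alpha_1$-part, and symmetrically. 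That commutation claim is precisely what is hard, and your sketch does not establish it.

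Your route (b), recasting the question as additivity of the ranks $\widetilde{r_{i,j}}$ across the two subnetworks of the Knight--Zelevinskii picture, is indeed the route the paper takes; but you stop at naming the idea. The paper's proof actually carries it out: it observes that the max flow on $\alpha$ dominates the sum of the max flows on the subnetworks $\alpha_1$ and $\alpha-\alpha_1$, invokes Ford--Fulkerson to reduce equality to the nonexistence of an augmenting path, and then shows that any augmenting path crossing from one subnetwork to the other would yield a sub-ladder $\sigma$ whose numerical invariants violate \Cref{Lem:IrredQFM} and \Cref{Basic} (the two cases $j>p$ and $i<q$). The substance of the lemma is exactly in that contradiction; without it, your proposal is a plan, not a proof.
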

\begin{proof}
In \Cref{Sub:Network}, we present a network implementation for finding the dual using maximum flows. If we now fix a rank $\widetilde{r_{i,j}}$ for which we want to find. Then taking the union $\widetilde{\alpha_1} \sqcup \widetilde{(\alpha - \alpha_1)}$ corresponds to finding the maximum flow through $\alpha_1$ and $(\alpha - \alpha_1)$ individually then summing them. The networks associated to $\alpha_1$ and $(\alpha - \alpha_1)$ will be distinct subnetworks of $\alpha$, therefore the sum of their maximum flows for each $(i,j)$ will hence be less than or equal to that in the network $\alpha$. Another thing to note is that the maximum flows for both $\alpha_1$ and $(\alpha - \alpha_1)$ will give a feasible flow through $\alpha$, thus we can apply the principle of the Ford-Fulkerson algorithm to study whether in-fact the flow through $\alpha$ will be greater than the sum of the flows. To do this we must look for possible augmenting flows. For the case when $i=j$ then equality will be true as the multiplicity of the integers must remain the same. Let us instead consider the case $i<j$, then one immediate consequence is that any augmenting flow must use nodes from both subnetworks, since the maximum flow through the subnetworks has already been pushed. An additional constraint on any augmenting flow is that it must include a forward edge from $\alpha_1$ to $(\alpha - \alpha_1)$ or $(\alpha - \alpha_1)$ to $\alpha_1$. This results in a flow through an integer $m$ contained in the simple multisegment $\alpha_1$ which previously did not have flow passing through it. \\


\begin{figure}[H]
 \hspace*{4cm}
\begin{tikzpicture}[node distance={10mm}, thick, main/.style = {draw=none}, transform canvas={scale=1}] 
\node[] (1) {$\mathbf{\Delta_e:}$}; 
\node[main] (2) [below of=1] {\textbf{\vdots}}; 
\node[main] (3) [below of=2]{$\mathbf{\Delta_n:}$}; 
\node[main] (4) [below of=3] {\textbf{\vdots}}; 
\node[main] (5) [below of=4]{$\mathbf{\Delta_b:}$}; 
\node[main] (6) [right of=5] {$-e$}; 
\node[main] (7) [right of=6]{$\dots$}; 
\node[main] (8) [right of=7] {$q$};
\node[main] (9) [right of=8]{$\dots$};  
\node[main] (10) [right of=9]{$b$};  
\node[main] (11) [above of=7]{$\reflectbox{$\ddots$}$};  
\node[main] (12) [right of=11]{\phantom{$7$}};  
\node[main] (13) [right of=12]{$\reflectbox{$\ddots$}$};  
\node[main] (14) [right of=13]{\phantom{$7$}};  
\node[main] (15) [right of=14]{$\reflectbox{$\ddots$}$};  

\node[main] (16) [above of=12] {$k$}; 
\node[main] (17) [right of=16]{$\dots$}; 
\node[main] (18) [right of=17] {$m$};
\node[main] (19) [right of=18]{$\dots$};  
\node[main] (20) [right of=19]{$n$};  
\node[main] (21) [above of=17]{$\reflectbox{$\ddots$}$};  
\node[main] (22) [right of=21]{\phantom{$7$}};  
\node[main] (23) [right of=22]{$\reflectbox{$\ddots$}$};  
\node[main] (24) [right of=23]{\phantom{$7$}};  
\node[main] (25) [right of=24]{$\reflectbox{$\ddots$}$};  
\node[main] (26) [above of=22] {$-b$}; 
\node[main] (27) [right of=26]{$\dots$}; 
\node[main] (28) [right of=27] {$p$};
\node[main] (29) [right of=28]{$\dots$};  
\node[main] (30) [right of=29]{$e$};  
\draw[black,thick,dashed] ($(26.north west)+(-0.15,0.2)$)  rectangle ($(30.south east)+(0.15,-0.2)$);
\draw[black,thick,dashed] ($(16.north west)+(-0.15,0.2)$)  rectangle ($(20.south east)+(0.15,-0.2)$);
\draw[black,thick,dashed] ($(6.north west)+(-0.15,0.2)$)  rectangle ($(10.south east)+(0.15,-0.2)$);
\end{tikzpicture}
\vspace{4.5cm}
\caption{The simple multisegment, $\alpha_1$.}\label{Fig:SimpleA1}
\end{figure}
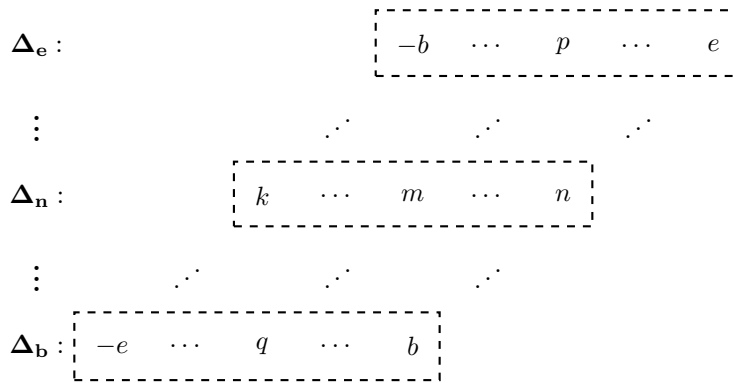

Let us assume that an augmenting flow exists. If we push the augmenting flow containing $m$ associated to $\Delta_n$ then there will exist a flow for $(i,j)$ for which $m$ in $\Delta_n$ is chosen. This flow will be characterised by the irreducible ladder multisegment $\gamma =\{\delta_i, \dots, \delta_m, \dots, \delta_j \}$, where $\delta_a$ represents the segment from which the integer $a$ is chosen from by the flow. By construction at least one of $j>p$ or $i <q$ is true, so we can split it into two individual cases:  

\begin{enumerate}
\item[$\mathbf{j>p:}$] Let us consider a subset of the irreducible ladder multisegment $$ \abovedisplayskip=5pt \belowdisplayskip=5pt \sigma = \{ \delta_m, \dots, \delta_j \},$$ then $\sigma$ will also form an irreducible ladder multisegment. We can then use \Cref{Lem:IrredQFM} to study $S_{\sigma}$, since we know that $$n_{\tilde{\sigma}} + n_{\sigma} = S_{\sigma} + c_{\sigma}.$$ We immediately know a couple of the numerical invariants for $\sigma$, that is, $c_{\sigma} = 1$ and $n_{\sigma}= j-m+1$. Now we also know that the length of the segment $\delta_m$ is $e+b+1$ and using \Cref{Basic}, we find $$ n_{\tilde{\sigma}} \geq L_{\sigma} \geq e+b+1.$$ Therefore, $$ S_{\sigma} = n_{\tilde{\sigma}} + n_{\sigma} - c_{\sigma} \geq (e+b+1) +(j-m+1) - 1 = e+b+j-m+1.$$ 
Further we have the condition $j>p$
$$ S_{\sigma} > e+(b+p)-m+1 = e +(m-k)-m+1 = e-k+1.$$ However given the lowest value of $\sigma$ is $k$ and the maximum value of the original multisegment $\alpha$ for which $\sigma$ is a subset of is $e$ then $S_{\sigma} \leq e-k+1$, hence we have found a contradiction.
\item[$\mathbf{i<q:}$] Let us consider a subset of the irreducible ladder multisegment $$ \abovedisplayskip=5pt \belowdisplayskip=5pt \sigma = \{ \delta_i, \dots, \delta_m \},$$ then $\sigma$ will also form an irreducible ladder multisegment. We can then use \Cref{Lem:IrredQFM} to study $S_{\sigma}$, since we know that $$ n_{\tilde{\sigma}} + n_{\sigma} = S_{\sigma} + c_{\sigma}.$$ We immediately know a couple of the numerical invariants for $\sigma$, that is, $c_{\sigma} = 1$ and $n_{\sigma}= m-i+1$. Now we also know that the length of the segment $\delta_m$ is $e+b+1$ and using \Cref{Basic}, we find $$  n_{\tilde{\sigma}} \geq L_{\sigma} \geq e+b+1.$$ Therefore, $$  S_{\sigma} = n_{\tilde{\sigma}} + n_{\sigma} - c_{\sigma} \geq (e+b+1) +(m-i+1) - 1 = e+b+m-i+1.$$ 
Further we have the condition $i<q$
$$  S_{\sigma} > e+(b-q)+m+1 = e +(n-m)-m+1 = e+n+1. $$ However given the highest value of $\sigma$ is $n$ and the minimum value of the original multisegment $\alpha$ for which $\sigma$ is a subset of is $-e$ then $S_{\sigma} \leq n-(-e)+1$, hence we have found a contradiction. \qedhere
\end{enumerate}
\end{proof}

\vspace{5pt}
\begin{thm} \label{Thm:ManySimpleA=B}
Let $\alpha$ be a multisegment formed by the taking the union of $m$ simple symmetric multisegments. If $\beta$ is a multisegment such that $\alpha \leq \beta$ and $\tilde{\alpha} \leq \tilde{\beta}$, then $\alpha = \beta$. 
\end{thm}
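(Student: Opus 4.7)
I proceed by induction on $m$, the number of simple symmetric multisegments in the union $\alpha = \bigsqcup_{i=1}^m \sigma_i$. The base case $m = 1$ is immediate: $\alpha$ is then a simple multisegment, so \Cref{Thm:Simple} gives $\alpha = \beta$ directly.

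For the inductive step, set $e = e_\alpha$ and write $\sigma_i = \{[-e_i + k,\, b_i + k] : 0 \le k \le e_i - b_i\}$ for each component. Among the components with $e_i = e$, choose $\sigma_1$ minimizing $b_i$; let $b = b_1$. By construction, $[-b, e] \in \sigma_1$ is the shortest segment of $\alpha$ containing $e$, and by the symmetry of $\alpha$ the shortest segment containing $-e$ is $[-e, b] \in \sigma_1$. The hypothesis of \Cref{Lem:A1b} is therefore satisfied with $\alpha_1 = \sigma_1$, yielding the endoscopic decomposition
\[
\alpha = \sigma_1 \sqcup \alpha', \qquad \tilde{\alpha} = \tilde{\sigma_1} \sqcup \tilde{\alpha'},
\]
where $\alpha' := \alpha - \sigma_1$ is a union of $m-1$ simple symmetric multisegments. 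The strategy is then to show that $\beta$ inherits the parallel decomposition $\beta = \sigma_1 \sqcup \beta'$ with $\tilde\beta = \tilde{\sigma_1} \sqcup \tilde{\beta'}$; granted this, the inequalities $\alpha \leq \beta$ and $\tilde\alpha \leq \tilde\beta$ restrict to $\alpha' \leq \beta'$ and $\tilde{\alpha'} \leq \tilde{\beta'}$ respectively, so the inductive hypothesis applied to $\alpha'$ yields $\alpha' = \beta'$, hence $\alpha = \beta$.

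The crucial structural fact to establish is $\sigma_1 \subseteq \beta$ as a sub-multisegment, after which \Cref{Lem:A1b} applied to $\beta$ delivers its endoscopic decomposition directly. To prove this containment, I would combine: (i) the identical top rows of $\alpha$ and $\beta$, which force $\beta$ to share the extremes $\pm e$ and all integer multiplicities; (ii) the invariants of \Cref{Basic}, applied both to $\alpha \leq \beta$ and to $\tilde\alpha \leq \tilde\beta$, to pin down the length of the shortest segment of $\beta$ containing $e$ to be exactly $e + b + 1$; and (iii) the uniqueness statement of \Cref{Lem:A1}, applied to $\beta$ to rule out any alternative sub-multisegment producing the dual segment $[b, e]$ in $\tilde\beta$. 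Together these force a copy of $\sigma_1$ to sit inside $\beta$.

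\textbf{Main obstacle.} The principal challenge is precisely this passage from the abstract partial-order hypotheses on the rank triangles to the concrete containment $\sigma_1 \subseteq \beta$. The subtlety lies in simultaneously leveraging \Cref{Lem:A1} in the dual direction while tracking how the shortest extremal segments behave under both $\alpha \leq \beta$ and $\tilde\alpha \leq \tilde\beta$, particularly in the presence of multiple copies of $\sigma_1$ or multiple components sharing $e_i = e$. Once this step is cleared, the remainder of the argument is a clean induction combining the established endoscopic decomposition with the inductive hypothesis.
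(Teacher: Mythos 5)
Your skeleton matches the paper's: peel off one symmetric simple component $\sigma_1$ (called $\alpha_1$ in the paper) using \Cref{Lem:A1} and \Cref{Lem:A1b}, establish that $\beta$ inherits the same component, cancel it from both sides via the endoscopic decomposition, and recurse, with \Cref{Thm:Simple} handling the base case. The reduction step — that $\alpha = \sigma_1 \sqcup \alpha'$, $\beta = \sigma_1 \sqcup \beta'$ with matching dual decompositions implies $\alpha' \leq \beta'$ and $\tilde{\alpha'} \leq \tilde{\beta'}$ — is sound because disjoint unions add rank triangles. So the plan is the right one.

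The gap is exactly where you flag it: the containment $\sigma_1 \subseteq \beta$ together with the fact that $\beta$'s shortest segments ending at $\pm e$ are no shorter than $\alpha$'s (which is needed for \Cref{Lem:A1} and \Cref{Lem:A1b} to apply to $\beta$ at all). Your item (ii) — invoking \Cref{Basic} to pin the length of $\beta$'s shortest segment containing $e$ to $e+b+1$ — does not work as stated: \Cref{Basic} controls $L_\alpha$ (the longest segment), $n_\alpha$, $c_\alpha$, $S_\alpha$, $C_\alpha$, but says nothing about the shortest segment through a particular extremal value. The paper instead argues directly at the level of the elementary order-raising moves of \Cref{Prop:Actions}: conjunction only lengthens, and union-intersection applied to two segments both ending at the maximum $e$ just reproduces the shorter of the two, so no sequence of moves from $\alpha$ can create a strictly shorter segment ending at $e$. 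From this, and from \Cref{Lem:A1} applied to $\alpha$, the rank $r_{\tilde{\alpha},b,e}$ equals the number of copies of $\sigma_1$ in $\alpha$; the hypothesis $\tilde{\alpha}\le\tilde{\beta}$ gives $r_{\tilde{\alpha},b,e}\le r_{\tilde{\beta},b,e}$, and the moves argument rules out strict inequality, so $r_{\tilde{\alpha},b,e}=r_{\tilde{\beta},b,e}$. Then \Cref{Lem:A1} applied to $\beta$ forces those copies of $\sigma_1$ to sit inside $\beta$ untouched (any move on a segment of $\sigma_1$ would break its ability to produce $[b,e]$), which is the containment you need before \Cref{Lem:A1b} can be invoked for $\beta$. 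In short: replace your item (ii) with a direct analysis of \Cref{Prop:Actions}, and the rest of your plan goes through.
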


\begin{proof}
Firstly, let $\alpha$ be a multisegment formed by the taking the union of $m$ simple symmetric multisegments, and let us assume that there exists a multisegment $\beta$ such that $\alpha \leq \beta$ and $\tilde{\alpha} \leq \tilde{\beta}$. Then by assumption $\tilde{\alpha} \leq \tilde{\beta}$ so $r_{\tilde{\alpha},i,j} \leq r_{\tilde{\beta},i,j}$ for all $i, j$, where the rank $r_{\tilde{\alpha},i,j}$ denotes the number of appearances of the sequence $i, \dots, j$ in the segments of $\tilde{\alpha}$. In other words, $r_{\tilde{\alpha},i,j}$ is the number of segments $[k,l]$ contained in $\tilde{\alpha}$ such that $k \leq i$ and $j \leq l$ as 
previously discussed. There will exist a maximum value of the multisegment denoted by $e$ then $-e$ will be the minimum value. So when the M\oe glin-Waldspurger algorithm is taken on $\alpha$ then it will choose the segment containing $e$ which is shortest and denoted $\Delta_e$. The segment $\Delta_e$ will be part of a simple symmetric multisegment $\alpha_1$ which forms $\alpha$, and the algorithm will hence generate a segment $[b, e]$ from this simple symmetric multisegment $$\abovedisplayskip=0pt \belowdisplayskip=2pt \alpha_1= \{[-e,b], \dots, [-b,e]\}.$$ 
The segment $\Delta_e=[-b,e]$ will be the shortest segment containing $e$ in $\alpha$ and by \Cref{Lem:A1}, the formation of the multisegment $\alpha_1$ results in it being the only possible contributing factor to $r_{\tilde{\alpha},b,e}$, hence $r_{\tilde{\alpha},b,e}$ simply denotes the number of copies of $\alpha_1$ in $\alpha$.

If we study $r_{\tilde{\alpha},b,e}$ and $r_{\tilde{\beta},b,e}$, then we know that $r_{\tilde{\beta},b,e}$ must be at least $r_{\tilde{\alpha},b,e}$. In order to have $r_{\tilde{\alpha},b,e}<r_{\tilde{\beta},b,e}$, then \Cref{Lem:A1} also implies that this would require us to create shorter segments containing $e$. However to do this in the formation of $\beta$, we would be required to use either union intersection or conjunction. We can immediately rule out the use of conjunction, since this only creates a longer segment. If we now look at union intersection, then the shorter segment which is created will be formed by those values which are repeated by the two segments that the action is taken on. So $e$ must appear in both in order to be in the shorter segment, however if $e$ appears in both then the union intersection will be equal to the shorter segment. Consequently, it is not possible to generate a shorter segment containing $e$ in $\alpha$.

Therefore $r_{\tilde{\alpha},b,e} = r_{\tilde{\beta},b,e}$ and as demonstrated in \Cref{Lem:A1} $\alpha_1$ is the only possible sub-multisegment which can generate $[b,e]$. Additionally, it will not be possible to perform any actions on any of the other segments in $\alpha_1$, because any operation on the segments in $\alpha_1$ would change them, and could no longer be used to form $[b,e]$. Thus each copy of $\alpha_1$ ($\alpha$ could include multiple copies) will also be sub-multisegments used to form $\beta$ since it is the only possible sub-multisegment which contributes to $r_{\tilde{\beta},b,e}$. We can now use \Cref{Lem:A1b} to find $$ \alpha = \alpha_1 \sqcup (\alpha - \alpha_1) \text{\,\,\, and \,\,\,} \beta = \alpha_1 \sqcup (\beta - \alpha_1)$$ will form endoscopic decompositions. The multisegment that remains $(\alpha - \alpha_1)$ following the removal will also be a union of simple symmetric multisegments by construction, thus we can use a recursive argument on the new multisegment $(\alpha - \alpha_1)$, the maximum value $e$, and shortest segment containing it $\Delta_e$ until we reach the case in which the multisegment is formed by a single symmetric multisegment or is empty. If we reach the case the multisegment is formed by a single symmetric multisegment, then we can use the fact that the partial ordering relation is satisfied for a single simple multisegment (\Cref{Thm:Simple}) to show that this should also remain fixed. Therefore, since all $m$ simple symmetric multisegments in $\alpha$ will be used as sub-multisegments in the formation of $\beta$ then $\alpha=\beta$. 
\end{proof}

Therefore we have proved that the partial ordering relation will be satisfied for ABV-packets for orbits of Arthur type. The following corollary proves the significant conjecture: \emph{ABV-packets for orbits of Arthur type in $GL_n$ are singletons}, which was first proposed by Cunningham et al. \cite{Cunningham2}.

\begin{corollary}\label{Cor:Arthur}
ABV-packets for orbits of Arthur type are singletons and consequently, ABV-packets for orbits of Arthur type are A-packets.
\end{corollary}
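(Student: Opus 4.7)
The plan is to read off the corollary as a direct translation of \Cref{Thm:ManySimpleA=B} through the combinatorial dictionary set up in the background section. First I would recall the identification, discussed in the paragraph preceding \Cref{Lem:A1} and also in \cite[Remark 1.1]{Cunningham2}, that a multisegment $\alpha$ arises from a Langlands parameter of Arthur type for $GL_n$ exactly when $\alpha$ is a union of simple symmetric multisegments. This places every Arthur-type orbit squarely in the hypothesis of \Cref{Thm:ManySimpleA=B}.

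Next I would translate the singleton ABV-packet condition into a multisegment statement. Via the classification of orbits by ranks in \cite[Proposition 2.3.7]{riddlesden2022combinatorial} together with \Cref{Cor:BoundaryRTandMS}, the ABV-packet containing the orbit associated to $\alpha$ corresponds to the set of multisegments $\beta$ with $\alpha \leq \beta$ and $\tilde{\alpha} \leq \tilde{\beta}$; the packet is a singleton exactly when every such $\beta$ must equal $\alpha$. But that is precisely the conclusion of \Cref{Thm:ManySimpleA=B} for $\alpha$ of Arthur type, so the first half of the corollary drops out immediately.

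For the \emph{consequently} clause, the plan is to invoke the standard containment, used in formulating the open-orbit conjecture in \cite{Cunningham2}, that the A-packet of an Arthur parameter is a non-empty subset of its associated ABV-packet. Once the ABV-packet has been forced to be a singleton, there is no room for strict containment, so the A-packet and the ABV-packet must coincide.

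I do not foresee a genuine obstacle here, since the hard combinatorial work is already carried out in \Cref{Thm:ManySimpleA=B}. The only potential subtlety is checking that conventions line up between the combinatorial side (rank triangles, multisegments, Zelevinskii duality) and the representation-theoretic side (orbits, dual orbits, ABV-packets), but this matching is precisely what \cite[Proposition 2.3.7]{riddlesden2022combinatorial} and \Cref{Cor:BoundaryRTandMS} supply, so the corollary follows by citation alone.
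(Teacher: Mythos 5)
Your proposal is correct and matches the paper's intended argument: the paper offers no separate proof for the corollary, treating it as the representation-theoretic reading of \Cref{Thm:ManySimpleA=B} via the dictionary supplied by \cite[Proposition 2.3.7]{riddlesden2022combinatorial}, \Cref{Cor:BoundaryRTandMS}, and the references \cite{Cunningham2} and \cite{Ray} for the Arthur-type/simple-symmetric identification and the A-packet $\subseteq$ ABV-packet containment. You have simply made explicit the translation steps the paper leaves implicit, so there is no genuine difference in approach.
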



\section*{Acknowledgements}
I would like to thank my Master's supervisor Prof. Andrew Fiori for both giving me the problem motivating this paper and for all his help in guiding me through the preparation of this article. I would also like to thank the Voganish project for the useful discussions whilst working on the problem.

\bibliographystyle{siam}
\bibliography{CombinatorialApproach-arXiv.bib}

\begin{thebibliography}{1}

\bibitem{Cunningham2}
{\sc C.~Cunningham, A.~Fiori, and N.~Kitt}, {\em {A}ppearance of the
  {K}ashiwara--{S}aito {S}ingularity in the {R}epresentation {T}heory of
  $p$-adic ${GL}_{16}$}, Pacific Journal of Mathematics, 321 (2023),
  pp.~239--282.

\bibitem{Cunningham1}
{\sc C.~Cunningham, A.~Fiori, A.~Moussaoui, J.~Mracek, and B.~Xu}, {\em Arthur
  Packets for $ p $-adic Groups by Way of Microlocal Vanishing Cycles of
  Perverse Sheaves, with Examples}, vol.~276, American Mathematical Society,
  2022.

\bibitem{Ray}
{\sc C.~Cunningham and M.~Ray}, {\em {P}roof of {V}ogan's {C}onjecture on
  {A}rthur {P}ackets: {S}imple {P}arameters of $p$-adic {G}eneral {L}inear
  {G}roups}, arXiv preprint arXiv:2206.01027,  (2022).

\bibitem{Knight}
{\sc H.~Knight and A.~Zelevinskii}, {\em Representations of {Q}uivers of {T}ype
  {A} and the {M}ultisegment {D}uality}, Advances in Mathematics, 117 (1996),
  pp.~273--293.

\bibitem{Moeglin}
{\sc C.~M{\oe}glin and J.-L. Waldspurger}, {\em Sur l'{I}nvolution de
  {Z}elevinskii}, Journal f{\"u}r die Reine und Angewandte Mathematik, 372
  (1986), pp.~136--177.

\bibitem{riddlesden2022combinatorial}
{\sc C.~D. Riddlesden}, {\em Combinatorial {A}pproach to {ABV}-packets for
  {GLn}}, Master's thesis, University of Lethbridge, Department of Mathematics
  \& Computer Science, 2022.

\end{thebibliography}

\end{document}